\title[Brauer-Manin obstructions]
{           {\protect\hfill \normalfont \tiny
            \\ \vspace{10pt}}
Vanishing of  algebraic \\ Brauer-Manin obstructions
}
\author{Mikhail Borovoi}
\address{
Raymond and Beverly Sackler School of Mathematical Sciences,
Tel Aviv University, 69978 Tel Aviv, Israel}
\email{borovoi@post.tau.ac.il}
\thanks{Partially supported by the
Hermann Minkowski Center for Geometry
and by the Israel Science Foundation (grant 807/07)}
\keywords{Hasse principle, weak approximation, Brauer-Manin obstruction,
Brauer group, linear algebraic groups, homogeneous spaces, algebraic fundamental group}
\subjclass[2000]{Primary:  14F22, Secondary: 20G30, 14M17, 11G35, 14G25, 14C22}
\DeclareTextFontCommand{\textcyr}{\fontencoding{OT2}
    \fontfamily{wncyr}\fontseries{m}\fontshape{n}\selectfont}
\newcommand{\Bcyr}{\textcyr{B}}
\theoremstyle{plain}
\newtheorem{theorem}{Theorem}[section]
\newtheorem{proposition}[theorem]{Proposition}
\newtheorem{lemma}[theorem]{Lemma}
\newtheorem{corollary}[theorem]{Corollary}
\newtheorem{conditional-result}[theorem]{Conditional Result}
\newtheorem{theorem?}{Theorem(?)}[section]
\newtheorem{proposition?}[theorem]{Proposition(?)}
\newtheorem{lemma?}[theorem]{Lemma(?)}
\newtheorem{corollary?}[theorem]{Corollary(?)}
\newtheorem*{theorem*}{Theorem}
\newtheorem*{proposition*}{Proposition}
\newtheorem*{lemma*}{Lemma}
\newtheorem*{corollary*}{Corollary}
\newtheorem*{question*}{Question}
\newtheorem*{conjecture*}{Conjecture}
\newtheorem*{claim*}{Claim}
\newtheorem*{introtheorem*}{Theorem}
\newtheorem*{introproposition*}{Proposition}
\newtheorem*{introlemma*}{Lemma}
\newtheorem*{introcorollary*}{Corollary}
\theoremstyle{definition}
\newtheorem{definition}[theorem]{Definition}
\newtheorem{example}[theorem]{Example}
\newtheorem*{definition*}{Definition}
\newtheorem*{example*}{Example}
\newtheorem{subsec}[theorem]{}
\newtheorem{question}[theorem]{Question}
\theoremstyle{remark}
\newtheorem{remark}[theorem]{Remark}
\newtheorem*{remark*}{Remark}
\DeclareSymbolFont{rsfs}{U}{rsfs}{m}{n}
\DeclareSymbolFontAlphabet{\mathcal}{rsfs}
\DeclareTextFontCommand{\textcyr}{\fontencoding{OT2}
    \fontfamily{wncyr}\fontseries{m}\fontshape{n}\selectfont}
\newcommand{\Sh}{\textcyr{Sh}}
\newcommand{\Be}{\textcyr{B}}
\newcommand{\sV}{{\mathcal{V}}}
\newcommand{\isoto}{\overset{\sim}{\to}}
\newcommand{\into}{\hookrightarrow}
\newcommand{\onto}{\twoheadrightarrow}
\newcommand{\labelto}[1]{\xrightarrow{\makebox[1.5em]{\scriptsize ${#1}$}}}
\renewcommand{\simeq}{\cong}
\def\uu{^\mathrm{u}}
\def\red{^\mathrm{red}}
\def\tor{^{\mathrm{tor}}}
\def\sc{^{\mathrm{sc}}}
\def\sss{^{\mathrm{ss}}}
\newcommand{\ZZ}{{\mathbb{Z}}}
\newcommand{\QQ}{{\mathbb{Q}}}
\newcommand{\Gal}{{\rm Gal}}
\newcommand{\Br}{{\rm Br}}
\renewcommand{\ker}{{\rm ker}}
\newcommand{\coker}{{\rm coker}}
\newcommand{\Hom}{{\rm Hom}}
\newcommand{\kbar}{{\overline{k}}}
\newcommand{\XX}{{\mathbb{X}^*}}
\newcommand{\pn}{\par\noindent}
\newcommand{\Gbar}{{\overline G}}
\newcommand{\Hbar}{{\overline H}}
\newcommand{\xbar}{{\overline x}}
\newcommand{\That}{{\widehat{T}}}
\newcommand{\Hhat}{{\widehat{H}}}
\newcommand{\Ghat}{{\widehat{G}}}
\def\Hbarhat{{\widehat{\overline{H}}}}
\newcommand{\Gm}{{\mathbb{G}_m}}
\renewcommand{\gg}{{\mathfrak{g}}}
\newcommand{\Aut}{{\rm Aut}}
\newcommand{\Xbar}{{\overline{X}}}
\newcommand{\Bro}{{\Br_1}}
\newcommand{\Bra}{{\Br_{{\rm a}}}}
\def\G{{\mathbb{G}}}
\def\Bhat{{\widehat{B}}}
\def\H{{\mathbb{H}}}
\def\xx{{\mathbb{X}_*}}
\def\tors{_{\mathrm{tors}}}
\def\psc{^{\prime\mathrm{\,sc}}}
\def\Ext{{\mathrm{Ext}}}
\def\Inn{\mathrm{Inn}}
\def\Btil{{\widetilde{B}}}
\def\psitil{{\widetilde{\psi}}}
\def\btil{{\widetilde{b}}}
\def\Ahat{{\widehat{A}}}
\begin{document}


\begin{abstract}
Let $X$ be a homogeneous space of a quasi-trivial $k$-group $G$,
with geometric stabilizer $\Hbar$, over a number field $k$.
We prove that under certain conditions on the character group of $\Hbar$, certain algebraic Brauer-Manin obstructions
to the Hasse principle and weak approximation vanish,
because the abelian groups where they take values vanish.
When $\Hbar$ is connected or abelian,
these algebraic Brauer-Manin obstructions to the Hasse principle and weak approximation are the only ones,
so we prove the Hasse principle and weak approximation for $X$ under certain conditions.
As an application, we obtain new sufficient conditions
for the Hasse principle and weak approximation for linear algebraic groups.
\end{abstract}

\maketitle
\tableofcontents

\section{Introduction: Sansuc's results}

We are inspired by Sansuc's paper \cite{Sansuc}.
In this section we state and discuss
Sansuc's results on the Hasse principle and weak approximation
for principal homogeneous spaces of connected linear algebraic groups admitting special coverings.

\begin{subsec}\label{subsec:spec-covering}
Let $k$ be a field of characteristic 0
and $\kbar$ be a fixed algebraic closure of $k$.
If $X$ is an algebraic variety over $k$, we write $\Xbar=X\times_k \kbar$.

A $k$-torus $T$ is called quasi-trivial if
its character group $\That:= \Hom(\overline{T}, \G_{m, \kbar})$
is a permutation $\Gal(\kbar/k)$-module, i.e. $\That$ has a $\ZZ$-basis which the Galois group permutes.

A {\em special covering} of a (connected) reductive $k$-group $G$
is a short exact sequence
$$
1\to B\to G'\to G\to 1,
$$
where $G'$ is a product of a simply connected semisimple $k$-group
and a quasi-trivial $k$-torus, and $B$ is a finite central subgroup of $G'$.

Not all reductive groups admit special coverings.
For example,  a nonsplit one-dimensional $k$-torus does not admit such a covering.

A finite group $\Gamma$ is called {\em metacyclic} if all its Sylow subgroups
are cyclic. Any cyclic group is metacyclic.
The group $\ZZ/2\ZZ\times \ZZ/2\ZZ$ is not metacyclic.
A  finite Galois extension $L/k$ is called metacyclic
if $\Gal(L/k)$ is a metacyclic group.
Clearly any cyclic extension is metacyclic.

Let $k$ be a field of characteristic 0  and $M$ be a $\Gal(\kbar/k)$-module.
We say that a field extension $K\subset \kbar$ of $k$ {\em splits} $M$
if $\Gal(\kbar/K)$ acts trivially on $M$.
If $T$ is a $k$-torus, then $K$ splits  $T$ if and only if $K$ splits $\That$.

Now let $k$ be a number field.
We denote by $\sV$ the set of all places $v$ of $k$,
and by $\sV_\infty$ the set of infinite (archimedean) places.
For $v\in \sV$ we denote by $k_v$ the completion of $k$ at $v$.

\end{subsec}

\begin{subsec} \label{subsec:Sansuc}
Let $G$ be a reductive $k$-group over a number field $k$ admitting a special covering
$$
1\to B\to G'\to G\to 1.
$$
Let $\Bhat$ denote the  character group of $B$,
i.e. $\Bhat=\Hom(B,\G_{m,\kbar})$.
Let $K$ be the smallest subfield of $\kbar$
splitting $\Bhat$
(i.e $K$ is the subfield corresponding to the subgroup
$\ker\left[\Gal(\kbar/k)\to\Aut(\Bhat)\right]$).
Let $X$ be a principal homogeneous space of $G$.

\end{subsec}

Sansuc \cite{Sansuc} proved the following results:

\begin{proposition}[\cite{Sansuc}, Cor.~5.2]
\label{prop:Sansuc-HP}
Let  $k,\ G,\ B,\ K$, and  $X$ be as in \ref{subsec:Sansuc}.
If $K/k$ is a metacyclic extension,
then $X$ satisfies the Hasse principle and weak approximation,
i.e. if $X(k_v)\neq\emptyset$ for any place $v$ of $k$,
then $X(k)\neq \emptyset$ and, moreover,
$X(k)$ is dense in $\prod_{v\in\sV} X(k_v)$.
\end{proposition}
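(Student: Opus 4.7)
The plan is to exploit the special covering $1\to B\to G'\to G\to 1$ to reduce the Hasse principle and weak approximation for $X$ to the corresponding properties for a suitably chosen principal homogeneous space $X'$ of $G'$. Since $G'$ is a product of a simply connected semisimple $k$-group and a quasi-trivial $k$-torus, both properties are known for $G'$ (the simply connected factor is handled by Kneser--Harder--Chernousov for the Hasse principle and by Platonov for weak approximation; the quasi-trivial torus is handled by Shapiro's lemma and Hilbert~90). Thus all the extra input should come from the finite central module $B$, via its character module $\Bhat$ and the metacyclic splitting extension $K/k$.

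The first concrete step is to form the connecting map $\delta\colon H^1(k,G)\to H^2(k,B)$ arising from the special covering. The local triviality of $X$ forces $\delta([X])$ to lie in the Tate--Shafarevich group $\Sh^2(k,B)$. The key lemma I would prove is $\Sh^2(k,B)=0$ under the metacyclic hypothesis. By Poitou--Tate duality this is equivalent to $\Sh^1(k,\Bhat)=0$ for the Cartier dual module, and the latter reduces by Chebotarev density to the injectivity of the restriction map
\[
H^1(\Gal(K/k),\Bhat)\longrightarrow\prod_{C} H^1(C,\Bhat),
\]
as $C$ ranges over cyclic subgroups of $\Gal(K/k)$. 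Since all Sylow subgroups of a metacyclic group are cyclic, a Sylow restriction--corestriction argument delivers the required injectivity.

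Once $\delta([X])=0$, the class $[X]$ lifts to some $[X']\in H^1(k,G')$. Two lifts differ by the $H^1(k,B)$-action on the fiber, and I would next modify $[X']$ by an appropriately chosen element of $H^1(k,B)$ so that the new $X'$ becomes everywhere locally trivial; the obstruction to this modification lives in a local-global quotient of $H^1(k,B)$ that vanishes by the same Sylow-cyclic argument. The Hasse principle for $G'$ then yields $X'(k)\neq\emptyset$, and projection gives $X(k)\neq\emptyset$. For weak approximation I would proceed analogously: given an adelic point $(x_v)\in\prod_v X(k_v)$, lift locally to adelic points of $X'$, use weak approximation for $G'$ together with the smoothness of the map $G'\to G$ to produce a $k$-point of $X'$ close to the chosen local lifts, and then project.

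The hardest part I expect is the pair of vanishing statements for cohomology of the finite module $B$ in the metacyclic setting, and their systematic insertion into the nonabelian long exact sequence of the special covering. Both the lifting obstruction in $\Sh^2(k,B)$ and the obstruction to adjusting the lift so that $X'$ is locally trivial reduce ultimately to controlling local-to-global maps on $H^1$ of $\Bhat$ over the metacyclic group $\Gal(K/k)$; the Sylow restriction--corestriction argument does the work, but one has to be careful with the direction of maps and the appropriate Poitou--Tate duality when passing between $B$ and $\Bhat$.
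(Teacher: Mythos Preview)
Your approach is essentially Sansuc's original argument and is genuinely different from the proof the paper gives. The paper proceeds via the Brauer--Manin obstruction: it identifies $\Be_\omega(X)\cong\Be_\omega(G)\cong\Sh^1_\omega(k,\Bhat)$ (using \cite[Lemma~6.8 and Prop.~9.8]{Sansuc}), shows this group vanishes under the metacyclic hypothesis (Lemma~\ref{lem:S-1.3}), and then quotes Sansuc's theorem that for principal homogeneous spaces of connected linear groups the algebraic Brauer--Manin obstruction associated with $\Be_\omega$ is the only one. Your route instead chases the nonabelian cohomology of the special covering directly, reducing everything to the known Hasse principle and weak approximation for $G'$ together with vanishing of local--global groups for $B$ and $\Bhat$. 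The Brauer--Manin packaging buys the paper a uniform framework that later extends to homogeneous spaces $H\backslash G$ with connected or abelian stabilizer (Theorem~\ref{thm:WA-S-H}); your approach is more elementary but is tied to torsors under groups admitting a special covering.

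Your weak-approximation step, however, needs repair. The sentence ``lift $(x_v)$ locally to adelic points of $X'$ and use weak approximation for $G'$'' does not work as written, because $X'(k_v)\to X(k_v)$ is in general not surjective: the obstruction to lifting a given $x_v$ lies in $H^1(k_v,B)$, which is typically nonzero. Once you know $X(k)\neq\emptyset$ and identify $X=G$, the correct statement is that the defect of weak approximation for $G$ in a finite set $S$ is governed by the cokernel of $H^1(k,B)\to\prod_{v\in S}H^1(k_v,B)/\partial_v(G(k_v))$, and it is here that the vanishing of $\Sh^1_\omega(k,\Bhat)$ (not merely of $\Sh^1(k,\Bhat)$) under the metacyclic hypothesis is needed. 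Your Sylow restriction--corestriction argument does deliver this stronger vanishing, but it has to be inserted at this point rather than through a naive local lift.
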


\begin{remark}
If the extension $K/k$ is not metacyclic (e.g. $\Gal(K/k)\cong \ZZ/2\ZZ\oplus\ZZ/2\ZZ$),
then the Hasse principle or weak approximation may fail for $X$,
see \cite[\S III.4.7]{Serre} and \cite[Examples 5.6, 5.7, and 5.8]{Sansuc}.
\end{remark}

\begin{proposition}[\cite{Sansuc}, Thm.~3.3 and Lemma 1.6]
\label{prop:Sansuc-WA}
Let  $k,\ G,\ B,\ K$, and  $X$ be as in \ref{subsec:Sansuc}.
Assume that $X$ has a $k$-point (hence we may identify $X$ with $G$).
Let $S\subset \sV$ be a finite set formed by places of $k$
with cyclic decomposition groups in $K/k$
(for example, assume that $K/k$ is unramified at all finite places in $S$).
Then $X$ has the weak approximation property in $S$,
i.e. the set $X(k)$ is dense in $\prod_{v\in S}X(k_v)$.
\end{proposition}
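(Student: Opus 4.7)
The plan is to combine the nonabelian cohomology sequence of the special covering with Sansuc's Lemma~1.6. Since $X$ has a $k$-point, identify $X$ with $G$; the task is to show $G(k)$ is dense in $\prod_{v\in S}G(k_v)$. First, I would enlarge $S$ to include all real archimedean places of $k$; this preserves the hypothesis, because the decomposition group at an archimedean place has order at most $2$ and is therefore cyclic. Weak approximation at the enlarged set implies weak approximation at the original~$S$.

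Given $(x_v)_{v\in S}\in\prod_{v\in S}G(k_v)$ (with $x_v=1$ at the newly adjoined real places), set $\beta_v=\partial(x_v)\in H^1(k_v,B)$, where $\partial$ is the coboundary in the exact sequence of pointed sets
\[
G'(k_v)\to G(k_v)\labelto{\partial} H^1(k_v,B)\to H^1(k_v,G').
\]
Since $B$ is a finite group of multiplicative type whose character group $\Bhat$ is split by $K$, the Galois module $B(\kbar)$ is itself split by $K$. Sansuc's Lemma~1.6 then produces a global class $\beta\in H^1(k,B)$ with $\loc_v(\beta)=\beta_v$ for every $v\in S$.

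Next I would verify that $\beta$ vanishes in $H^1(k,G')$. Writing $G'=G\sc\times T$ with $T$ quasi-trivial, Hilbert~90 gives $H^1(k,T)=0$, so $H^1(k,G')=H^1(k,G\sc)$. The Hasse principle for simply connected semisimple groups (Kneser and Harder, with \cite{Ch} for type $E_8$) furnishes an injection $H^1(k,G\sc)\hookrightarrow\prod_{v\text{ real}}H^1(k_v,G\sc)$. At each real $v\in S$, local exactness forces $\beta_v$ to die in $H^1(k_v,G')$---either because $\beta_v=\partial(x_v)$ or because $\beta_v=0$---so the image of $\beta$ is locally trivial at every real place, hence globally trivial. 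Therefore $\beta=\partial(x)$ for some $x\in G(k)$.

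For $v\in S$ we then have $\partial(x)=\partial(x_v)$ in $H^1(k_v,B)$; by exactness and the centrality of $B$ one writes $x\cdot x_v^{-1}=\pi(h_v)$ for some $h_v\in G'(k_v)$, where $\pi\colon G'\to G$ is the covering map. Weak approximation for $G'$ at $S$ is available: it holds for $T$ by Hilbert~90 and for $G\sc$ by the Platonov--Chernousov weak approximation theorem. Choose $h\in G'(k)$ approximating $(h_v^{-1})_{v\in S}$; then $\pi(h)\cdot x\in G(k)$ approximates each $x_v$, as required. The main ingredient is Sansuc's Lemma~1.6; the technical crux is the enlargement of $S$ to absorb the real places, which is what allows one to pass from local triviality of $\beta$ in $H^1(k_v,G')$ at $S$ to a global lift via the Hasse principle for $G\sc$.
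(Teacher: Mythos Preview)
Your argument is essentially Sansuc's original direct proof (via the long exact sequence attached to the special covering, together with the Hasse principle and weak approximation for $G'$), and it is sound in outline. The paper, by contrast, gives a genuinely different ``alternative proof'': it shows that $\Be_{S,\emptyset}(X)\cong\Sh^1_{S,\emptyset}(k,\Bhat)=0$ (Proposition~\ref{prop:metacyclic}(i)) and then invokes Sansuc's theorem that the algebraic Brauer--Manin obstruction associated with $\Be_{S,\emptyset}$ is the only obstruction to weak approximation in $S$. Your approach is more self-contained and avoids the Brauer--Manin machinery, while the paper's approach illustrates its main theme and generalizes uniformly to homogeneous spaces that are not principal.

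One inaccuracy to fix: the assertion that ``the Galois module $B(\kbar)$ is itself split by $K$'' is false in general. For instance, if $B=\mu_n$ then $\Bhat=\ZZ/n\ZZ$ with trivial Galois action, so $K=k$, yet $B(\kbar)$ is split only by $k(\zeta_n)$. What you actually need is the surjectivity of $H^1(k,B)\to\prod_{v\in S}H^1(k_v,B)$, and this does hold under the stated hypothesis on $K$: by Poitou--Tate duality the cokernel is dual to $\Sh^1_{S,\emptyset}(k,\Bhat)$, which vanishes by Sansuc's Lemma~1.1(iii) (Lemma~\ref{lem:S-1.1}(iii) here) since $K$ splits $\Bhat$ and every $v\in S$ has cyclic decomposition group in $K/k$. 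So drop the claim about $B(\kbar)$ and justify the lifting step via $\Bhat$ and duality; the rest of your argument then goes through.
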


\begin{subsec}
The results of Propositions \ref{prop:Sansuc-HP} and \ref{prop:Sansuc-WA}
can be explained in terms of the Brauer group of $X$.
Let $X$ be a smooth $k$-variety.
We write $\Br(X)$ for the cohomological Brauer group of $X$,
i.e $\Br(X)=H^2_{\text{\'et}}(X,\Gm)$.
We set $\Bro(X)=\ker[\Br(X)\to\Br(\Xbar)]$.
We define the \emph{algebraic Brauer group} $\Bra(X)$
by $\Bra(X)=\coker[\Br(k)\to\Bro(X)]$.

When $k$ is a number field and $S\subset \sV$ is a finite set of places of $k$,
we set
$$
\Be_S(X)=\ker\left[\Bra(X)\to\prod_{v\notin S} \Bra(X_{k_v})\right].
$$
Set  $\Be_\omega(X)=\bigcup_S \Be_S$.
We set $\Be(X):=\Be_\emptyset(X)$ and
 $\Be_{S,\emptyset}(X)=\Be_S(X)/\Be_\emptyset(X)=\Be_S(X)/\Be(X)$.

Sansuc \cite[(6.2.3)]{Sansuc}, following Manin \cite{Manin},  defined a natural pairing
$$
\prod_{v\in \sV}X(k_v)\times\Bcyr_\omega(X)\to \QQ/\ZZ
$$
(see also \cite[(5.2)]{Skorobogatov}),
which is continuous in the first argument and is additive in the second one.
This pairing induces a continuous map
\begin{equation}\label{eq:BM-HW}
m\colon \prod_{v\in \sV}X(k_v)\to \Hom(\Bcyr_\omega(X),\QQ/\ZZ).
\end{equation}
If $x_0\in X(k)\subset \prod_{v\in \sV}X(k_v)$, then $m(x_0)=0$.
If $m$ is not identically 0, say, $m(x_\sV)\neq 0$
for some   $x_\sV=(x_v)_{v\in \sV}\in\prod_{v\in \sV}X(k_v)$,
then $x_\sV$ is not contained in the closure of $X(k)$,
weak approximation fails for $X$.
We say that $m$ is {\em the algebraic Brauer-Manin obstruction
to the Hasse principle and weak approximation for $X$ associated with $\Be_\omega$}.

Assume that $X$ is a smooth $k$-variety having a $k$-point.
Let $S\subset \sV$ be a finite set of places of $k$.
Inspired by \cite{CTS} and \cite{Sansuc}, we defined in
 \cite[\S 1]{Bor96} a natural pairing
$$
\prod_{v\in S} X(k_v)\times \Bcyr_{S,\emptyset}(X)\to \QQ/\ZZ.
$$
This pairing induces a continuous map
\begin{equation}\label{eq:BM-W}
m_S\colon \prod_{v\in S}X(k_v)\to\Hom(\Bcyr_{S,\emptyset}(X),\QQ/\ZZ).
\end{equation}
If $x_0\in X(k)\subset \prod_{v\in S} X(k_v)$, then $m_S(x_0)=0$.
If $m_S$ is not identically 0, say $m_S(x_S)\neq 0$ for some
$x_S\in  \prod_{v\in S} X(k_v)$, then $x_S$ is not contained
in the closure of $X(k)$, hence weak approximation in $S$ fails for $X$.
We say that $m_S$ is {\em the algebraic Brauer-Manin obstruction
to  weak approximation in $S$ for $X$ associated with $\Bcyr_{S,\emptyset}(X)$}.

Using Sansuc's methods and results, one can show
that under the assumptions of Proposition \ref{prop:Sansuc-HP}
we have $\Be_\omega=0$, hence $m=0$, see Proposition \ref{prop:metacyclic}(ii) below.
Moreover, the Hasse principle and weak approximation
hold for $X$ because there is no Brauer-Manin obstruction.
Similarly, under the assumptions of Proposition \ref{prop:Sansuc-WA}
we have $\Be_{S,\emptyset}=0$, hence $m_S=0$,
see Proposition \ref{prop:metacyclic}(i) below.
Again, weak approximation in $S$ holds for $X$
because there is no Brauer-Manin obstruction.
We provide some details.
\end{subsec}

\begin{proposition}\label{prop:metacyclic}
Let  $k,\ G,\ B,\ K$, and  $X$ be as in \ref{subsec:Sansuc}.
Let $S\subset \sV$ be a finite set of places of $k$.
\par(i)  If  any place $v\in S$ has a cyclic decomposition group in $K/k$,
then $\Be_{S,\emptyset}(X)=0$.
\par(ii) If  $K/k$ is a metacyclic extension, then $\Be_\omega(X)=0$.
\end{proposition}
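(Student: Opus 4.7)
The plan is to translate the vanishing of $\Be_S(X)$ and $\Be_\omega(X)$ into a statement about Galois cohomology of the module $\widehat B$, which by hypothesis is split by $K$, and then to apply the Chebotarev density theorem together with the Sylow criterion in group cohomology.

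\emph{Step 1: Reduction to the Galois module $\widehat B$.} For a smooth geometrically rational $X$ one has Sansuc's formula $\Bra(X)\simeq H^1(k,\Pic\overline X)$. Since $X$ is a PHS of $G$, $\overline X\simeq\overline G$ and $\Pic\overline X\simeq\Pic\overline G$. The special covering $1\to B\to G'\to G\to 1$, with $G'$ a product of a simply connected group (trivial geometric Picard) and a quasi-trivial torus (trivial geometric Picard, permutation character module), produces an exact sequence of $\Gal(\kbar/k)$-modules
$$
0\to M\to \widehat B\to \Pic\overline G\to 0,
$$
where $M$ is the image of $\widehat{G'}$, a quotient of a permutation module. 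Chasing the resulting long exact cohomology sequences and using that $H^1$ of a permutation module vanishes, one identifies $\Be_S(X)$ and $\Be_\omega(X)$ with the corresponding Shafarevich subgroups of $H^1(k,\widehat B)$. Since $K$ splits $\widehat B$, inflation--restriction gives $H^1(k,\widehat B)=H^1(\Gamma,\widehat B)$ for $\Gamma=\Gal(K/k)$, and the local restriction at a place $v$ factors through $\mathrm{res}_{\Gamma_v}$, where $\Gamma_v\subset\Gamma$ is a decomposition group at $v$.

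\emph{Step 2: The Chebotarev/Sylow argument.} The Chebotarev density theorem ensures that every cyclic subgroup of $\Gamma$ arises, up to conjugacy, as $\Gamma_v$ for infinitely many (unramified) places $v$. Conjugate decomposition groups yield the same restriction (inner automorphisms act trivially on cohomology), so this is enough.

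For part (i), let $c\in\Be_S(X)$. For each $v\in S$ the decomposition group $\Gamma_v$ is cyclic by assumption, so we pick $v'\notin S$ with $\Gamma_{v'}$ conjugate to $\Gamma_v$; the vanishing of $c$ at $v'$ forces $\mathrm{res}_{\Gamma_v}(c)=0$, and hence $c$ vanishes at $v$ as well. Thus $c\in\Be_\emptyset(X)$ and $\Be_{S,\emptyset}(X)=0$.

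For part (ii), let $c\in\Be_\omega(X)$ vanish outside a finite set $S_0$. For each prime $p$ choose a Sylow $p$-subgroup $\Gamma_p\subset\Gamma$; it is cyclic because $\Gamma$ is metacyclic, so by Chebotarev some $v\notin S_0$ has $\Gamma_v$ conjugate to $\Gamma_p$, giving $\mathrm{res}_{\Gamma_p}(c)=0$. Since $\mathrm{cor}\circ\mathrm{res}$ on $H^1(\Gamma,\widehat B)$ equals multiplication by $[\Gamma:\Gamma_p]$, prime to $p$, restriction to $\Gamma_p$ is injective on the $p$-primary component; running over all $p$ forces $c=0$.

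The principal obstacle is Step 1: one must carefully track the long exact cohomology sequences attached to the special covering to confirm that passing from $\Pic\overline G$ to $\widehat B$ preserves the relevant Shafarevich subgroups, and that the identifications are compatible with localization at each place $v$. Once this bookkeeping is dispatched, Step 2 is a short and standard piece of group cohomology.
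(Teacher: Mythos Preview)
Your approach is the paper's: reduce $\Be_S(X)$ to $\Sh^1_S(k,\widehat B)$ via Sansuc's identifications, then kill the latter by Chebotarev plus the Sylow criterion. The paper simply cites Sansuc's Lemma~6.8 and Prop.~9.8 for the reduction and his Lemmas~1.1(iii) and 1.3 for the vanishing; your Step~2 is a correct direct proof of these last two lemmas.

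One correction to your Step~1: the vanishing of $H^1$ of a permutation module alone does not complete the diagram chase. In your sequence $0\to M\to\widehat B\to\Pic\overline G\to 0$ the module $M$ is only a \emph{quotient} of the permutation module $\widehat{G'}$, so neither $H^1(k,M)$ nor $H^2(k,M)$ need vanish, and the long exact sequence does not collapse to give $\Sh^1_S(k,\Pic\overline G)\cong\Sh^1_S(k,\widehat B)$. The missing ingredient is $\Sh^2_S(k,\widehat{G'})=0$ for a permutation module $\widehat{G'}$ (Sansuc's (1.9.1), via Shapiro and the Hasse principle for Brauer groups of fields). With this in hand the cleanest packaging is the hypercohomology argument the paper gives later as Lemma~\ref{lem:Sha}: from $H^1(k,\widehat{G'})=0$ and $\Sh^2_S(k,\widehat{G'})=0$ one obtains $\Sh^1_S(k,\widehat B)\cong\Sh^2_S(k,\widehat{G'}\to\widehat B)$, and the right-hand side is $\Be_S(G)$. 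So your ``principal obstacle'' is a bit more than bookkeeping, but once this extra vanishing is invoked your proof and the paper's coincide.
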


\begin{proof}
We use the notation of \S \ref{subsec:Sh} below.
By \cite[Lemma 6.8]{Sansuc} $\Bra(X)\cong\Bra(G)$,
hence $\Be_\omega(X)\cong\Be_\omega(G)$ and $\Be_{S,\emptyset}(X)\cong \Be_{S,\emptyset}(G)$.
By \cite[Prop. 9.8]{Sansuc} $\Be_\omega(G)\cong\Sh^1_\omega(k,\Bhat)$ and $\Be(G)\cong\Sh^1(k,\Bhat)$.
One proves similarly that $\Be_S(G)\cong\Sh^1_S(k,\Bhat)$, hence
$\Be_{S,\emptyset}(G)\cong\Sh^1_{S,\emptyset}(k,\Bhat):=\Sh^1_S(k,\Bhat)/\Sh^1_\emptyset(k,\Bhat)$.
In case (i), since $S$ is formed by places of $k$ with cyclic decomposition groups in $K/k$,
by \cite[Lemma 1.1(iii)]{Sansuc} (see also Lemma \ref{lem:S-1.1}(iii) below)  $\Sh^1_{S,\emptyset}(k,\Bhat)=0$,
hence $\Be_{S,\emptyset}(X)=0$.
In case (ii), since $K/k$ is metacyclic,
by \cite[Lemma 1.3]{Sansuc} (see also Lemma \ref{lem:S-1.3} below) $\Sh^1_\omega(k,\Bhat)=0$,
hence $\Be_\omega(X)=0$.
\end{proof}

\begin{proof}[An alternative proof of Proposition \ref{prop:Sansuc-HP}.]
By Proposition \ref{prop:metacyclic}(ii)
the Brauer-Manin obstruction $m$ of formula \eqref{eq:BM-HW} is identically zero in our case,
i.e there is no algebraic Brauer-Manin obstruction to the Hasse principle and weak approximation
associated with $\Be_\omega$.
Since by \cite[Cor.~8.7 and Cor.~8.13]{Sansuc} this obstruction is the only one,
we conclude that the Hasse principle and weak approximation hold for $X$.
\end{proof}

\begin{proof}[An alternative proof of Proposition \ref{prop:Sansuc-WA}.]
 By Proposition \ref{prop:metacyclic}(i)
the Brauer-Manin obstruction $m_S$ of formula \eqref{eq:BM-W} is identically zero in our case,
i.e. there is no algebraic Brauer-Manin obstruction to weak approximation in $S$
associated with $\Be_{S,\emptyset}(X)$.
Since by \cite[Cor. 8.13]{Sansuc} this obstruction is the only one,
we conclude that  weak approximation in $S$ holds for $X$.
\end{proof}

\section{Introduction (continued): our main results}

In this section we state our  generalizations of Sansuc's results in two cases:
homogeneous spaces of quasi-trivial groups
and principal homogeneous spaces of connected linear algebraic groups.
Our main results are Theorems
\ref{thm:WA-S-H} and \ref{thm:WA-S-pi1},
generalizing Proposition \ref{prop:metacyclic}
and proving that the groups  $\Be_{S,\emptyset}(X)$ and $\Be_\omega(X)$ vanish under certain conditions on $X$.

In order to state our results  we use the notion of a quasi-trivial group,
introduced by Colliot-Th\'el\`ene \cite[Definition 2.1]{CT}, see also Definition \ref{def:qt} below.

Let $X$ be a homogeneous space of a  quasi-trivial $k$-group $G$ over a number field $k$.
Let $\Hbar\subset\Gbar$ be the stabilizer of a $\kbar$-point $\xbar\in X(\kbar)$
(we do not assume that $\Hbar$ is connected or abelian).
It is well known that the character group $\Hbarhat$ of $\Hbar$ has a canonical structure of a Galois module,
see \cite[4.1]{Bor96}  or \cite[Rem.~5.7(1)]{BvH2}, see also  \S \ref{subsec:Galois-action} below.

\begin{theorem}\label{thm:WA-S-H}
Let $X$ be a left homogeneous space of a  quasi-trivial $k$-group $G$ over a number field $k$.
Let $\Hbar\subset\Gbar$ be the stabilizer of a $\kbar$-point $\xbar\in X(\kbar)$
Let $S\subset \sV$ be a finite subset.
Let $K/k$ be the smallest Galois extension in $\kbar$ splitting the Galois module $\Hbarhat$.
\par(i)  If  any place $v\in S$ has a cyclic decomposition group in $K/k$,
then $\Be_{S,\emptyset}(X)=0$.
\par(ii) If  $K/k$ is a metacyclic extension,
then $\Be_\omega(X)=0$.
\end{theorem}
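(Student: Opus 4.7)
The plan is to mimic the proof of Proposition \ref{prop:metacyclic}, replacing the character group $\Bhat$ of the central kernel of a special covering by the character group $\Hbarhat$ of the geometric stabilizer. Everything then reduces, exactly as in Sansuc's setting, to the vanishing of Shafarevich groups of the Galois module $\Hbarhat$, to which one applies Sansuc's Lemmas \ref{lem:S-1.1}(iii) and \ref{lem:S-1.3}.

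The first step, which I expect to be the crux, is to establish a canonical isomorphism
$$
\Bra(X) \;\cong\; H^1(k,\Hbarhat)
$$
for $X$ a homogeneous space of a quasi-trivial $k$-group $G$ with geometric stabilizer $\Hbar$, functorial in the base field. This should follow from the Borovoi--van~Hamel computation of $\UPic(\Xbar)$ together with the spectral sequence $H^p(k,\UPic(\Xbar))\Rightarrow \Bra(X)$: since $G$ is quasi-trivial, the modules $\Ghat$ and $\Pic(\Gbar)$ are (respectively) a permutation module and zero, so only the $\Hbarhat$-contribution survives in degree one. A mild subtlety is that $\Hbar$ need not be connected or abelian, so one must be careful that $\Hbarhat$ carries the canonical Galois structure recalled in \S\ref{subsec:Galois-action} and that the isomorphism is compatible with base change $k\rightsquigarrow k_v$; this compatibility is what makes the Shafarevich-group formalism go through.

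Given the isomorphism above and its local analogues, one immediately gets
$$
\Be_S(X)\cong\Sh^1_S(k,\Hbarhat),\qquad \Be(X)\cong\Sh^1(k,\Hbarhat),
$$
and therefore
$$
\Be_{S,\emptyset}(X)\cong\Sh^1_{S,\emptyset}(k,\Hbarhat),\qquad \Be_\omega(X)\cong\Sh^1_\omega(k,\Hbarhat).
$$
Since by hypothesis $K/k$ is the smallest Galois extension of $k$ splitting $\Hbarhat$, Sansuc's arithmetic lemmas apply verbatim to the Galois module $\Hbarhat$: in case (i), where every $v\in S$ has cyclic decomposition group in $K/k$, Lemma \ref{lem:S-1.1}(iii) yields $\Sh^1_{S,\emptyset}(k,\Hbarhat)=0$; in case (ii), where $K/k$ is metacyclic, Lemma \ref{lem:S-1.3} yields $\Sh^1_\omega(k,\Hbarhat)=0$. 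Combining with the identifications above finishes the proof.

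The main obstacle is thus the first step: producing the identification $\Bra(X)\cong H^1(k,\Hbarhat)$ in the stated generality. Once $X$, $G$, and $\Hbar$ are handled correctly---in particular, once one has a description of $\UPic(\Xbar)$ which is insensitive to the unipotent and semisimple parts of $\Hbar$---the rest of the argument is purely formal and identical to Sansuc's. Everything else, including the passage to $\Be_\omega$ and $\Be_{S,\emptyset}$, is bookkeeping on filtered colimits of Shafarevich groups.
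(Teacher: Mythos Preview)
Your overall strategy---reduce $\Be_S(X)$ to $\Sh^1_S(k,\Hbarhat)$ and then invoke Lemmas~\ref{lem:S-1.1}(iii) and~\ref{lem:S-1.3}---is exactly the paper's, and once that reduction is in hand the rest is indeed formal. The gap is in your first step: the isomorphism $\Bra(X)\cong H^1(k,\Hbarhat)$ is \emph{false} in general, even for quasi-trivial $G$. What \cite[Thm.~7.2]{BvH2} actually gives is
\[
\Bra(X)\;\cong\;\H^2\bigl(k,\ \Ghat\to\Hbarhat\bigr)
\]
(hypercohomology of the two-term complex, $\Ghat$ in degree~$0$). From the associated long exact sequence one gets only an injection $H^1(k,\Hbarhat)\hookrightarrow\Bra(X)$, with cokernel sitting inside $H^2(k,\Ghat)$. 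The point is that ``$\Ghat$ permutation'' kills $H^1(k,\Ghat)$ but \emph{not} $H^2(k,\Ghat)$; for instance already $G=\Gm$, $\Hbar=1$ gives $\Bra(\Gm)\cong H^2(k,\ZZ)\neq 0$ while $H^1(k,\Hbarhat)=0$. So your spectral-sequence heuristic ``only the $\Hbarhat$-contribution survives in degree one'' breaks down precisely here.

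The fix, and this is what the paper does (Lemma~\ref{lem:Sha} and Proposition~\ref{prop:Sh1}), is to pass to Shafarevich groups \emph{before} trying to drop $\Ghat$: one has $\Be_S(X)\cong\Sh^2_S(k,\Ghat\to\Hbarhat)$, and now the relevant vanishing is $\Sh^2_S(k,\Ghat)=0$ for permutation modules (a Chebotarev-type fact, cf.\ \cite[(1.9.1)]{Sansuc}), which together with $H^1(k,\Ghat)=0$ and a diagram chase yields $\Sh^1_S(k,\Hbarhat)\isoto\Sh^2_S(k,\Ghat\to\Hbarhat)$. In short: the passage from the complex to $\Hbarhat$ is legitimate only at the $\Sh_S$-level, not at the level of $\Bra$ itself.
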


Theorem \ref{thm:WA-S-H} will be proved in Section \ref{sec:HS}.

\begin{corollary}\label{cor:WA}
Let $X$ be as in Theorem \ref{thm:WA-S-H}.
Assume that $X$ has a $k$-point,
i.e. $X=G/H$, where $G$ is a quasi-trivial $k$-group and $H\subset G$ is a $k$-subgroup.
Assume that $H$ is connected or abelian.
Let $S\subset\sV$ be a finite set of places of $k$
formed by places with cyclic decomposition groups in $K/k$
(for example assume that $K/k$ is unramified at all finite places in $S$).
Then $X$ has weak approximation in $S$.
\end{corollary}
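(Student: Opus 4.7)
The plan is to deduce this corollary immediately from Theorem \ref{thm:WA-S-H}(i), in exactly the same style as the alternative proof of Proposition \ref{prop:Sansuc-WA} given above for Sansuc's setup. The only extra input needed, beyond the vanishing of $\Be_{S,\emptyset}(X)$, is a ``no other obstruction'' result for weak approximation on homogeneous spaces $X=H\backslash G$ with $G$ quasi-trivial and with $H$ connected or abelian.

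First, I would verify the parenthetical remark: if $K/k$ is unramified at a finite place $v\in S$, then the decomposition group at $v$ in $\Gal(K/k)$ is generated by the Frobenius element and hence is cyclic, so the hypothesis on $S$ is indeed implied by unramifiedness of $K/k$ at finite places of $S$. Thus the assumptions of Theorem \ref{thm:WA-S-H}(i) are satisfied, where $K$ is the splitting field of the Galois module $\Hbarhat$ attached to $X$ via $\Hbar\subset\Gbar_\kbar$ (here $\Hbar=H\times_k\kbar$ since $X$ has a $k$-point). Applying Theorem \ref{thm:WA-S-H}(i) gives $\Be_{S,\emptyset}(X)=0$, and therefore the continuous map
\[
m_S\colon \prod_{v\in S}X(k_v)\to\Hom(\Be_{S,\emptyset}(X),\QQ/\ZZ)
\]
of \eqref{eq:BM-W} is identically zero. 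In other words, the algebraic Brauer-Manin obstruction to weak approximation in $S$ for $X$ associated with $\Be_{S,\emptyset}(X)$ vanishes.

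Second, I would invoke the theorem — which under the hypothesis that $H$ is connected or abelian is known (the connected case going back to \cite{Bor96}, the abelian case being a more recent extension via the abelianized Galois cohomology of homogeneous spaces, established elsewhere in this paper or cited from the author's earlier work) — asserting that this algebraic Brauer-Manin obstruction is the \emph{only} obstruction to weak approximation in $S$ for $X$. Combined with the vanishing just obtained and with the non-emptiness of $X(k)$, this yields that $X(k)$ is dense in $\prod_{v\in S}X(k_v)$, i.e. $X$ has weak approximation in $S$.

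The main obstacle is therefore not the computation but the verification of the ``only obstruction'' ingredient. In the connected case one reduces via a $z$-extension (or directly using quasi-triviality of $G$) to a Brauer-Manin statement for a torsor under a reductive group, where the result is classical. The abelian case is more delicate: one must pass to the abelianized setting and control $\Be_{S,\emptyset}$ in terms of Galois cohomology of $\Hbarhat$, using Poitou-Tate duality for the finite $S$. Once that ingredient is in place, the corollary is immediate from Theorem \ref{thm:WA-S-H}(i).
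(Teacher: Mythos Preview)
Your proposal is correct and follows essentially the same route as the paper: apply Theorem~\ref{thm:WA-S-H}(i) to obtain $\Be_{S,\emptyset}(X)=0$, then invoke the ``only obstruction'' theorem to conclude weak approximation in $S$. The paper's proof is just these two sentences, citing \cite[Thm.~2.3]{Bor96} for the only-obstruction step.

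The one place where you hesitate unnecessarily is the abelian case of the only-obstruction result: you speculate that it is ``a more recent extension'' or ``established elsewhere in this paper.'' In fact \cite{Bor96} already treats both the connected and the abelian stabilizer cases (as its title indicates), so the single citation \cite[Thm.~2.3]{Bor96} covers everything and no further argument or reduction is needed. Your closing paragraph sketching how one might prove that ingredient is therefore superfluous.
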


\begin{proof}
Since $H$ is connected or abelian, the algebraic Brauer-Manin obstruction
$m_S$ associated with $\Be_{S,\emptyset}$ is the only
obstruction to  weak approximation in $S$ for $X$,
see \cite[Thm.~2.3]{Bor96}.
By Theorem \ref{thm:WA-S-H}(i) $m_S=0$, hence $X$ has weak approximation in $S$.
\end{proof}

\begin{corollary}\label{cor:RA}
Let $X$ be as in Corollary \ref{cor:WA},
i.e. $X=G/H$, where $G$ is a quasi-trivial $k$-group and $H\subset G$ is a $k$-subgroup.
Assume that $H$ is connected or abelian.
Then $X$ has the real approximation property,
i.e $X(k)$ is dense in $\prod_{v\in \sV_\infty}X(k_v)$.
\end{corollary}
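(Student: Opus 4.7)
The plan is to deduce Corollary \ref{cor:RA} directly from Corollary \ref{cor:WA} by taking $S=\sV_\infty$. To apply Corollary \ref{cor:WA} we must verify that every archimedean place $v$ of $k$ has cyclic decomposition group in the finite Galois extension $K/k$ of \ref{thm:WA-S-H}.

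First I would recall that for any archimedean place $v$, the local field $k_v$ is either $\RR$ or $\CC$. For a place $w$ of $K$ lying above $v$, the completion $K_w$ is again either $\RR$ or $\CC$, and the decomposition group of $v$ in $K/k$ is $\Gal(K_w/k_v)$. Since $[\CC:\RR]=2$ and $[\RR:\RR]=[\CC:\CC]=1$, the decomposition group has order $1$ or $2$, and is in particular cyclic. Thus the finite set $S=\sV_\infty$ consists entirely of places of $k$ with cyclic decomposition group in $K/k$.

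Hence the hypotheses of Corollary \ref{cor:WA} are satisfied with $S=\sV_\infty$, and we conclude that $X(k)$ is dense in $\prod_{v\in\sV_\infty}X(k_v)$, which is precisely the real approximation property. There is no serious obstacle here: the only content is the trivial observation that archimedean decomposition groups are always cyclic, which makes $\sV_\infty$ an admissible choice of $S$ in Corollary \ref{cor:WA}.
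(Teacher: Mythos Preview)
Your proof is correct and follows exactly the same approach as the paper: both take $S=\sV_\infty$, observe that archimedean places have cyclic decomposition groups in $K/k$, and invoke Corollary~\ref{cor:WA}. You simply spell out the elementary reason (decomposition groups at archimedean places have order $1$ or $2$) a bit more explicitly than the paper does.
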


\begin{proof}
For any $v\in\sV_\infty$ the decomposition group of $v$ in $K/k$ is cyclic,
and by Corollary \ref{cor:WA} $X$ has weak approximation in $\sV_\infty$,
i.e. real approximation.
\end{proof}

\begin{question}\label{q:hom-spaces-real-approximation}
Does there exist a homogeneous space $X=G/H$, where $G$ is a quasi-trivial $k$-group over a number field $k$,
and $H\subset G$ is a nonconnected non-abelian  $k$-subgroup,
such that real approximation fails for $X$?
\end{question}

\begin{corollary}\label{cor:RA-connected stabilizer}
Let $X$ be a homogeneous space having a $k$-rational point with connected stabilizer,
of a connected linear algebraic group (not necessarily quasi-trivial)
over a number field $k$;
in other words, $X= G/H$, where $G$ is a connected $k$-group
and $H\subset G$ is a connected $k$-subgroup.
Then $X$ has the real approximation property.
\end{corollary}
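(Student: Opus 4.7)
The plan is to reduce the given situation to that of Corollary \ref{cor:RA} by replacing the ambient group $G$ with a quasi-trivial one. Concretely, I would construct a central extension of connected $k$-groups
\[
1\to F\to G_1\to G\to 1,
\]
in which $G_1$ is quasi-trivial and $F$ is a $k$-group of multiplicative type.

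In characteristic zero, such a \emph{quasi-trivial resolution} of $G$ is standard. Using the Levi decomposition $G=G\uu\rtimes L$ with $L$ reductive, it suffices to produce a quasi-trivial resolution of $L$ and then set $G_1:=G\uu\rtimes L_1$, with $L_1$ acting on $G\uu$ through its surjection onto $L$. For the reductive $L$, one first applies a $z$-extension to pass to a central extension $1\to Z\to L'\to L\to 1$ with $Z$ a quasi-trivial central torus and $(L')^{\mathrm{der}}$ simply connected; then one pulls back a flasque resolution of the coradical torus $L'/(L')^{\mathrm{der}}$ along the projection $L'\onto L'/(L')^{\mathrm{der}}$. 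Combining these steps yields a central extension $1\to F\to L_1\to L\to 1$ with $L_1$ quasi-trivial and $F$ a $k$-group of multiplicative type.

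Let $H_1\subset G_1$ denote the preimage of $H$ under $G_1\onto G$. Since $H_1\to H$ is surjective with connected kernel $F$, the group $H_1$ is connected. As $F$ is central in $G_1$ and contained in $H_1$, the natural morphism $H_1\bs G_1\to H\bs G$ induced by $G_1\onto G$ is an isomorphism of $k$-varieties. Thus $X\cong H_1\bs G_1$ exhibits $X$ as a homogeneous space of the quasi-trivial $k$-group $G_1$ with connected stabilizer $H_1$, and Corollary \ref{cor:RA} applied to this presentation immediately yields the real approximation property for $X$.

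The main obstacle is assembling the quasi-trivial resolution and verifying that its pieces interact as claimed; no deep new input is required beyond the standard machinery of $z$-extensions, flasque resolutions of tori, and the Levi decomposition. Once the resolution is in place, the remainder of the argument is a purely formal reduction to Corollary \ref{cor:RA}.
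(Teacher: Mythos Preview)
Your proposal is correct and follows essentially the same route as the paper: both arguments replace $G$ by a quasi-trivial group $G'$ surjecting onto $G$, check that the new stabilizer (the preimage of $H$) remains connected, and then invoke Corollary~\ref{cor:RA}. The only difference is packaging: the paper delegates the construction of $G'$ to Lemma~\ref{lem:repr-quasi-trivial} (which in turn quotes \cite[Prop.-Def.~3.1]{CT}), whereas you sketch an explicit construction via a $z$-extension followed by a flasque resolution of the coradical torus. One small point worth making explicit in your write-up: in your construction $F$ is in fact a torus (an extension of the quasi-trivial torus $Z$ by the flasque torus $S$), which is what justifies the later claim that $F$ is connected; calling it merely ``of multiplicative type'' leaves that connectedness unproved.
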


\begin{proof}
By Lemma \ref{lem:repr-quasi-trivial} below we can write
$X=G'/H'$, where $G'$ is a quasi-trivial $k$-group
and $H'\subset G'$ is a connected $k$-subgroup.
Now by Corollary \ref{cor:RA} $X$ has real approximation.
\end{proof}

\begin{corollary}\label{cor:HP}
Let $X$ be as in Theorem \ref{thm:WA-S-H}.
Assume that $\Hbar$ is connected or abelian.
Assume that $K/k$ is a metacyclic extension.
Then $X$ satisfies the Hasse principle and weak approximation.
\end{corollary}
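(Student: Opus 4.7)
The plan is to follow the same structure as the alternative proof of Proposition \ref{prop:Sansuc-HP} given above, but substituting Theorem \ref{thm:WA-S-H}(ii) for Proposition \ref{prop:metacyclic}(ii). In other words, the strategy is: use the metacyclic hypothesis on $K/k$ to kill $\Be_\omega(X)$, and then invoke the fact that for homogeneous spaces with connected or abelian geometric stabilizer the algebraic Brauer-Manin obstruction associated with $\Be_\omega$ is the only obstruction to the Hasse principle and weak approximation.

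First, since $K/k$ is assumed to be metacyclic, Theorem \ref{thm:WA-S-H}(ii) yields $\Be_\omega(X)=0$. Consequently, the target $\Hom(\Be_\omega(X),\QQ/\ZZ)$ of the map $m$ in \eqref{eq:BM-HW} is trivial, so $m$ is identically zero on $\prod_{v\in\sV}X(k_v)$: there is no algebraic Brauer-Manin obstruction to the Hasse principle or to weak approximation for $X$ coming from $\Be_\omega$.

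Second, I would cite the result that for a homogeneous space $X$ of a connected linear algebraic $k$-group $G$ (in particular a quasi-trivial one) whose geometric stabilizer $\Hbar$ is connected or abelian, the algebraic Brauer-Manin obstruction associated with $\Be_\omega$ is the only obstruction to the Hasse principle and weak approximation for $X$; this is in the spirit of \cite[Thm.~2.2]{Bor96} (connected stabilizer) together with its counterpart for abelian stabilizers. Combined with the vanishing $m=0$ from the previous step, this forces that whenever $X(k_v)\neq\emptyset$ for every $v\in\sV$ one has $X(k)\neq\emptyset$, and moreover $X(k)$ is dense in $\prod_{v\in\sV}X(k_v)$, which is exactly the Hasse principle and weak approximation for $X$.

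The main obstacle in this argument is not any new computation but rather pinpointing a single reference that records the ``only obstruction'' statement uniformly in both the connected and abelian stabilizer cases (rather than two separate references). Modulo locating this input, the proof is essentially a verbatim transcription of the alternative proof of Proposition \ref{prop:Sansuc-HP}, with the Sansuc-style vanishing $\Sh^1_\omega(k,\Bhat)=0$ replaced by the more general $\Be_\omega(X)=0$ provided by Theorem \ref{thm:WA-S-H}(ii).
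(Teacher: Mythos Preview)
Your proposal is correct and follows essentially the same approach as the paper: invoke Theorem~\ref{thm:WA-S-H}(ii) to obtain $\Be_\omega(X)=0$, then use that for $\Hbar$ connected or abelian the algebraic Brauer-Manin obstruction associated with $\Be_\omega$ is the only one. The reference you were looking for is \cite[Thms.~2.2 and 2.3]{Bor96}, which the paper cites to handle both the connected and abelian stabilizer cases simultaneously.
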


\begin{proof}
Since $\Hbar$ is connected or abelian,
the algebraic Brauer-Manin obstruction $m$ associated with $\Be_\omega$ is the only
obstruction to the Hasse principle and weak approximation,
see \cite[Thms. 2.2 and 2.3]{Bor96}.
By Theorem \ref{thm:WA-S-H}(ii) we have $m=0$, hence $X$ satisfies the Hasse principle and weak approximation.
\end{proof}

Note that Propositions \ref{prop:Sansuc-HP} and \ref{prop:Sansuc-WA} (due to Sansuc)
follow immediately from our Corollaries \ref{cor:HP} and \ref{cor:WA}, respectively.
Note also that the special case of Corollary \ref{cor:WA}
when $G$ is simply connected and $H$ is connected was earlier proved in \cite[Cor.~1.6]{Bor90} by a different method.
\medskip

In order to state our results on principal homogeneous spaces of connected $k$-groups,
we use the notion of the \emph{algebraic fundamental group} $\pi_1(G)$ introduced in \cite[\S1]{Bor98}
(where we wrote $\pi_1(\Gbar)$ instead of $\pi_1(G)$),
see also \cite[\S 6]{CT}.
Note that $\pi_1(G)$ a finitely generated Galois module.

\begin{theorem}\label{thm:WA-S-pi1}
Let $G$ be a connected linear $k$-group over a number field $k$.
Let $X$ be a  principal homogeneous space (torsor) of $G$ over $k$.
Let $S\subset \sV$ be a finite set of places of $k$.
Let $K/k$ be the smallest Galois extension in $\kbar$ splitting the Galois module $\pi_1(G)$.
\par(i)  If  any place $v\in S$ has a cyclic decomposition group in $K/k$,
then $\Be_{S,\emptyset}(X)=0$.
\par(ii) If  $K/k$ is a metacyclic extension,
then $\Be_\omega(X)=0$.
\end{theorem}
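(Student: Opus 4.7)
The plan is to reduce Theorem \ref{thm:WA-S-pi1} to Theorem \ref{thm:WA-S-H} by realizing the torsor $X$ as a homogeneous space of a suitable quasi-trivial group. First I would reduce to the case of a reductive $G$: the unipotent radical $G\uu$ is a normal $k$-subgroup, the quotient map $X\to X/G\uu$ is an iterated $\Ga$-bundle (hence preserves the algebraic Brauer group and all $\Be$-subgroups), and $\pi_1(G)=\pi_1(G/G\uu)$. So we may assume $G$ is connected reductive.

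For such $G$, I would invoke Colliot-Th\'el\`ene's theorem on flasque resolutions to choose a short exact sequence of connected $k$-groups
$$
1\to F\to G'\to G\to 1,
$$
where $G'$ is a quasi-trivial $k$-group and $F$ is a central flasque $k$-torus. The resolution can be arranged so that $\xx(F)$ fits in a short exact sequence
$$
0\to\xx(F)\to P\to\pi_1(G)\to 0
$$
with $P$ a permutation $\Gal(K/k)$-module (for instance, take $P=\ZZ[\Gal(K/k)]^{\oplus n}$ equipped with some surjection onto $\pi_1(G)$). Then $\xx(F)$, and hence its $\ZZ$-dual $\widehat{F}$, is split by some subfield $K_F\subset K$. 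Pulling back the $G$-action on $X$ along $G'\to G$ produces a transitive $G'$-action on $X$, and because $X$ is a $G$-torsor the $\Gbar$-stabilizer of any $\xbar\in X(\kbar)$ is trivial, forcing the $\overline{G'}$-stabilizer to be exactly $\overline{F}$. Thus $X$ is a right homogeneous space of the quasi-trivial $k$-group $G'$ with geometric stabilizer $\overline{F}$.

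It remains to apply Theorem \ref{thm:WA-S-H} and transfer the hypotheses. Since $K_F\subset K$, the decomposition group at $v$ in $K_F/k$ is a quotient of the one in $K/k$, and $\Gal(K_F/k)$ is a quotient of $\Gal(K/k)$. Quotients of cyclic groups are cyclic, and quotients of metacyclic groups are metacyclic (since Sylow subgroups of a quotient are images of Sylow subgroups), so the hypotheses of parts (i) and (ii) of Theorem \ref{thm:WA-S-pi1} pass to the corresponding hypotheses of Theorem \ref{thm:WA-S-H}. Applying that theorem to $X$ viewed as a homogeneous space of $G'$ yields $\Be_{S,\emptyset}(X)=0$ in case (i) and $\Be_\omega(X)=0$ in case (ii), as required. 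The step I expect to be most delicate is the construction of the flasque resolution with $\xx(F)$ split by $K$: one must use the flexibility in Colliot-Th\'el\`ene's construction to choose the covering permutation lattice over $\Gal(K/k)$ and then verify that the resulting $G'$ is quasi-trivial in the sense of this paper (simply connected semisimple part, quasi-trivial torus quotient) with $F$ realizing the prescribed kernel $\xx(F)$.
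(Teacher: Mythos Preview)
Your approach is correct and is essentially the paper's \emph{alternative} proof (\S\ref{sec:alternative}), which combines Proposition~\ref{thm:extension} with Theorem~\ref{thm:WA-S-H}. The paper's Remark after Lemma~\ref{lem:Cyril} confirms that the kernel can be taken to be a flasque torus split by $K$, which is your $F$. One point you gloss over, handled explicitly in \S\ref{subsec:alternative}: Theorem~\ref{thm:WA-S-H} uses the Galois action on $\widehat{\overline{F}}$ defined by the homogeneous-space structure on $X$ (the twisted action of \S\ref{subsec:Galois-action}), not the action coming from the $k$-form $F$. These coincide precisely because $F$ is central in $G'$; you should say so.

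The paper's \emph{primary} proof (\S\ref{sec:PHS}) takes a different, more direct route. Rather than resolving $G$ by a quasi-trivial group, it uses the canonical isomorphism $\Bra(X)\cong\Bra(G)\cong\H^2(k,\pi_1(G)^D)$ from \cite{Sansuc} and \cite{BvH}, where $M^D=(P^\vee\to L^\vee)$ is the dual complex of a free resolution $0\to L\to P\to M\to 0$ of $M=\pi_1(G)$ with $P$ a permutation $\Gal(K/k)$-module. Lemma~\ref{lem:Sha} then gives $\Sh^2_S(k,M^D)\cong\Sh^1_S(k,L^\vee)$, and since $K$ splits $L^\vee$, Corollary~\ref{cor:4cases} and Lemma~\ref{lem:S-1.3} finish. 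This argument stays entirely at the level of Galois modules and avoids constructing any auxiliary group; your route is more geometric and has the virtue of reusing Theorem~\ref{thm:WA-S-H} wholesale, at the cost of needing the delicate construction of the resolution with controlled splitting field (which is exactly the content of Proposition~\ref{thm:extension}).
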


Theorem \ref{thm:WA-S-pi1} will be proved in Section \ref{sec:PHS}.

\begin{proposition}\label{thm:extension}
Let $G$ be a connected linear algebraic group over a field $k$ of characteristic 0.
Let $K/k$ be the smallest Galois extension in $\kbar$ splitting $\pi_1(G)$.
Then there exists an exact sequence
$$
1\to H\to G'\to G\to 1,
$$
where $G'$ is a quasi-trivial $k$-group and $H$ is a central $k$-subgroup of multiplicative type,
such that $K$ splits both $\Hhat$ and $\widehat{G'}$.
\end{proposition}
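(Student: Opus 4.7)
First I would reduce to the reductive case. Let $G\uu := R_u(G)$ and $G\red := G/G\uu$. The projection $G \to G\red$ induces an isomorphism $\pi_1(G) \isoto \pi_1(G\red)$ of Galois modules, so $K$ also splits $\pi_1(G\red)$. Assuming the proposition for $G\red$, one obtains $1 \to H \to G_1 \to G\red \to 1$ with $G_1$ quasi-trivial reductive and $K$ splitting both $\Hhat$ and $\widehat{G_1}$. The pullback $G' := G \times_{G\red} G_1$ then fits into $1 \to H \to G' \to G \to 1$, with $R_u(G') = G\uu$ and $(G')\red = G_1$; hence $G'$ is quasi-trivial. Characters of $G'$ factor through $G_1$, so $\widehat{G'} \simeq \widehat{G_1}$ remains split by $K$.

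Now assume $G$ reductive, and set $\Gamma := \Gal(K/k)$, so that $\pi_1(G)$ is a finitely generated $\ZZ[\Gamma]$-module. Every such module is a quotient of a permutation $\Gamma$-module, so I choose a surjection $\phi : P \twoheadrightarrow \pi_1(G)$ with $P$ permutation, and set $M := \ker\phi$. As a subgroup of the free abelian group $P$, the module $M$ is torsion-free; its Galois action factors through $\Gamma$, so $K$ splits $M$. We thus have an exact sequence $0 \to M \to P \to \pi_1(G) \to 0$ of finitely generated Galois modules, both ends of which are split by $K$.

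The crux is to realize this module-level resolution by a group-level central extension. By the functorial construction of the algebraic fundamental group developed in \cite{Bor98} (see also \cite[\S 6]{CT}), any surjection $P \twoheadrightarrow \pi_1(G)$ from a torsion-free finitely generated Galois module $P$ is induced by a central extension
$$
1 \to H \to G' \to G \to 1
$$
of connected reductive $k$-groups, in which $H$ is a $k$-torus, $(G')^{\mathrm{der}}$ is simply connected, $\pi_1(G') \simeq P$ compatibly with $\phi$, and $X_*(H) \simeq M$. This realization step is the main---and essentially only---technical point; everything else is formal. Invoking it yields the desired $G'$ and $H$, and the splitting conditions follow: $P$ being a permutation $\Gamma$-module makes $(G')\tor$ (whose cocharacter module is $P$) quasi-trivial, hence $G'$ is quasi-trivial and $\widehat{G'} \simeq \Hom_\ZZ(P, \ZZ)$ is a permutation $\Gamma$-module split by $K$; likewise $\Hhat \simeq \Hom_\ZZ(M, \ZZ)$ is split by $K$ because $M$ is.
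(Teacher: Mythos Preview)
Your reduction to the reductive case via pullback is fine, and the overall strategy --- resolve $\pi_1(G)$ by a permutation module and realize this at the group level --- is sound. But the step you flag as ``the main technical point'' is exactly where the argument breaks down as written.

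The assertion that \emph{any} surjection $P \twoheadrightarrow \pi_1(G)$ from a torsion-free Galois module is induced by a central extension $1 \to H \to G' \to G \to 1$ with $H$ a torus, $(G')^{\mathrm{der}}$ simply connected, $\pi_1(G') \simeq P$, and $X_*(H) \simeq M$, is not a statement available in \cite{Bor98} or \cite[\S 6]{CT}. Those references develop the functoriality of $\pi_1$ and construct \emph{some} resolution (the flasque resolution of \cite[Prop.-Def.~3.1]{CT}), but they do not prove that a prescribed $P \to \pi_1(G)$ can be lifted to a central extension over $k$. Carrying this out would require constructing the appropriate root datum and then descending from $\kbar$ to $k$ by twisting through $G^{\mathrm{ad}}$ --- genuine work that you have not supplied and cannot cite away.

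The paper takes a different route that avoids this realization problem entirely. It sets $G' = G^{\mathrm{sc}} \times_k Q$, where $Q$ is a quasi-trivial torus surjecting onto the radical $Z^0$ of $G$ (with $X_*(Q)$ a permutation $\Gal(K/k)$-module), and lets $H$ be the kernel of the obvious map $G' \to G$. Crucially, the induced map $\pi_1(G') \to \pi_1(G)$ is \emph{not} required to be surjective (for $G$ semisimple one even has $\pi_1(G') = 0$), and $H$ is only of multiplicative type, not a torus. The splitting of $\widehat{H}$ by $K$ is then obtained from Lemma~\ref{lem:Cyril}, which identifies $\widehat{H}$ with $\Ext^0_\ZZ(\pi_1(G') \to \pi_1(G),\ \ZZ)$; since $K$ splits both $\pi_1(G')$ and $\pi_1(G)$, it splits this Ext group. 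So the paper trades your unproven realization step for an explicit construction plus a cohomological computation.
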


Proposition \ref{thm:extension} will be proved in Section \ref{sec:extension}.
In Section \ref{sec:alternative} we shall give an alternative proof of Theorem \ref{thm:WA-S-pi1}
based on Proposition \ref{thm:extension} and Theorem \ref{thm:WA-S-H}.

\begin{corollary}\label{cor:WA-pi1}
Let $k$, $G$, and $K$ be as in Theorem \ref{thm:WA-S-pi1}.
Let $S\subset\sV$ be a finite set of places of $k$
formed by places with cyclic decomposition groups in $K/k$
(for example assume that $K/k$ is unramified at all finite places in $S$).
Then $G$ has weak approximation in $S$.
\end{corollary}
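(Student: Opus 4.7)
The plan is to reduce Corollary \ref{cor:WA-pi1} to Corollary \ref{cor:WA} by exploiting the covering supplied by Proposition \ref{thm:extension}. Apply that proposition to $G$ to obtain an exact sequence
$$
1\to H\to G'\to G\to 1
$$
in which $G'$ is a quasi-trivial $k$-group, $H$ is a central $k$-subgroup of multiplicative type (in particular abelian), and $K$ splits both $\widehat{H}$ and $\widehat{G'}$. This exhibits $G$ itself as the homogeneous space $H\backslash G'$ of the quasi-trivial group $G'$ with abelian geometric stabilizer $\overline{H}$, which is precisely the setup in which Corollary \ref{cor:WA} can be invoked.

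Next I would compare splitting fields. Let $K_H\subset \kbar$ denote the smallest Galois extension of $k$ splitting the Galois module $\widehat{H}$. Since $K$ splits $\widehat{H}$, we have $K_H\subseteq K$; consequently, for every $v\in S$ the decomposition group of $v$ in $K_H/k$ is a quotient of its decomposition group in $K/k$, and hence cyclic by hypothesis. We may therefore apply Corollary \ref{cor:WA} to $X=H\backslash G'=G$ with the set $S$: the group $G'$ is quasi-trivial, the stabilizer $H$ is abelian, and every $v\in S$ has cyclic decomposition group in $K_H/k$. The conclusion is that $G$ has weak approximation in $S$, which is exactly the statement of Corollary \ref{cor:WA-pi1}.

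There is no substantive obstacle beyond invoking the two results already in place: the genuinely deep input is Proposition \ref{thm:extension}, which produces the quasi-trivial resolution of $G$ whose abelian kernel is split inside $K$, and Corollary \ref{cor:WA} (which rests on Theorem \ref{thm:WA-S-H} together with the fact that the algebraic Brauer--Manin obstruction is the only obstruction to weak approximation in $S$ for homogeneous spaces of quasi-trivial groups with abelian stabilizer). The only bookkeeping point is the compatibility $K_H\subseteq K$ that allows the cyclicity hypothesis to carry over, and this is automatic because any quotient of a cyclic group is cyclic.
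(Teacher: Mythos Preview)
Your argument is correct and coincides with the alternative proof the paper explicitly mentions: ``Alternatively, the corollary follows from Proposition \ref{thm:extension} and Corollary \ref{cor:WA}.'' The paper's primary proof is slightly more direct: it applies Theorem \ref{thm:WA-S-pi1}(i) to $X=G$ to get $\Be_{S,\emptyset}(G)=0$, and then invokes Sansuc's result \cite[Cor.~8.13]{Sansuc} that for a connected linear $k$-group the Brauer--Manin obstruction associated with $\Be_{S,\emptyset}$ is the only obstruction to weak approximation in $S$; this avoids constructing the quasi-trivial resolution and the bookkeeping with $K_H\subseteq K$, at the cost of using Theorem \ref{thm:WA-S-pi1} rather than Theorem \ref{thm:WA-S-H}.
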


\begin{proof}
Under our assumptions the algebraic Brauer-Manin obstruction $m_S$
associated with $\Be_{S,\emptyset}(X)$  is the only
obstruction to  weak approximation in $S$ for $G$,
 see \cite[Cor.~8.13]{Sansuc}.
By Theorem \ref{thm:WA-S-pi1}(i) $m_S=0$, hence $G$ has weak approximation in $S$.
Alternatively, the corollary follows from Proposition \ref{thm:extension} and Corollary \ref{cor:WA}.
\end{proof}

\begin{corollary}\label{cor:HP-pi1}
Let $k$, $G$, $X$, and $K$ be as in Theorem \ref{thm:WA-S-pi1}.
Assume that $K/k$ is a metacyclic extension.
Then $X$ satisfies the Hasse principle and weak approximation.
\end{corollary}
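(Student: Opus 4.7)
The plan is to follow the alternative proof of Proposition~\ref{prop:Sansuc-HP}: combine the vanishing statement Theorem~\ref{thm:WA-S-pi1}(ii) with the fact that, for principal homogeneous spaces of connected linear algebraic groups over a number field, the algebraic Brauer--Manin obstruction is the only obstruction to the Hasse principle and weak approximation.

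First I would invoke \cite[Cor.~8.7 and Cor.~8.13]{Sansuc}, together with Chernousov's theorem \cite{Ch} to remove the hypothesis on factors of type $E_8$ in \emph{loc.\ cit.} (cf.\ Remark~\ref{rem:Sansuc-E8}), in order to conclude that the map $m$ of \eqref{eq:BM-HW} associated with $\Be_\omega(X)$ is the only obstruction to the Hasse principle and weak approximation for the torsor $X$ under the connected linear $k$-group $G$. Next I would apply Theorem~\ref{thm:WA-S-pi1}(ii): since $K/k$ is metacyclic by hypothesis, $\Be_\omega(X)=0$, so $m=0$ identically. In the absence of any obstruction, $X$ satisfies the Hasse principle and weak approximation.

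A parallel, more indirect, route is to use Proposition~\ref{thm:extension} to present $G$ as $G'/H$, with $G'$ a quasi-trivial $k$-group and $H$ a central $k$-subgroup of multiplicative type such that $K$ splits both $\widehat{G'}$ and $\widehat{H}$. The torsor $X$ then becomes a right homogeneous space of $G'$ whose geometric stabilizer is a twist of $H$, hence abelian. The smallest Galois subextension of $\kbar/k$ splitting $\widehat{H}$ is contained in $K$, and since every quotient of a finite group with cyclic Sylow subgroups again has cyclic Sylow subgroups, this subextension is itself metacyclic; Corollary~\ref{cor:HP} then applies and yields the same conclusion.

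No serious obstacle is anticipated: given Theorem~\ref{thm:WA-S-pi1} and the Sansuc--Chernousov input, the corollary is essentially immediate. The only point requiring mild care is the preservation of metacyclicity under passage to subextensions in the alternative approach, which reduces to a routine Sylow-theoretic observation.
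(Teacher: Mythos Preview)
Your proposal is correct and follows essentially the same approach as the paper: the primary argument (Sansuc's Cor.~8.7 and 8.13 plus Chernousov, combined with Theorem~\ref{thm:WA-S-pi1}(ii)) matches the paper's proof verbatim, and your alternative route via Proposition~\ref{thm:extension} and Corollary~\ref{cor:HP} is precisely the alternative the paper sketches as well. The only minor difference is that the paper simply notes $K$ splits $\widehat{\Hbar}$ (established in \S\ref{subsec:alternative}), while you correctly spell out the additional step that the smallest splitting field of $\widehat{\Hbar}$ is a subextension of $K$ and hence still metacyclic.
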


\begin{proof}
Under our assumptions the algebraic Brauer-Manin obstruction $m$
associated with $\Be_\omega$ is the only
obstruction to the Hasse principle and weak approximation for $X$,
see \cite[Cor.~8.7 and Cor.~8.13]{Sansuc}.
By Theorem \ref{thm:WA-S-pi1}(ii) we have $m=0$,
hence $X$ satisfies the Hasse principle and weak approximation.
Alternatively, the corollary follows from Proposition \ref{thm:extension} and Corollary \ref{cor:HP}
(because $K$ splits $\Hbarhat$, where $\Hbar$ is the stabilizer of $\xbar$ in $\overline{G'}$,
see \S \ref{subsec:alternative} below).
\end{proof}

\begin{remark}
Sansuc  proved in \cite[Cor.~3.5(iii)]{Sansuc} that any connected $k$-group over a number field $k$ has the real approximation property.
This follows from our Corollary \ref{cor:WA-pi1} (because infinite places have cyclic decomposition groups in $\Gal(K/k)$)
and from our Corollary \ref{cor:RA-connected stabilizer}
(because we may write $G=G/\{1\}$, and $\{1\}$ is a connected $k$-subgroup).
\end{remark}

Note that Sansuc proved the following result:

\begin{proposition}[Sansuc] \label{prop:Sansuc-splitting}
Let $G$ be a connected linear $k$-group over a number field $k$.
Let $S\subset \sV$ be a finite subset.
Assume that $G$ splits over a finite Galois extension $K/k$.
\par(i)  If  any place $v\in S$ has a cyclic decomposition group in $K/k$,
then $G$ has weak approximation in $S$ (cf. \cite[Cor.~3.5(ii)]{Sansuc}).
\par(ii) If  $K/k$ is a metacyclic extension,
then $\Be_\omega(X)=0$ (cf. \cite[Prop.~9.8]{Sansuc}),
hence any principal homogeneous space $X$ of $G$ over $k$
satisfies the Hasse principle and weak approximation.
\end{proposition}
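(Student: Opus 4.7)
\noindent\emph{Proof plan.} The strategy is to deduce Proposition \ref{prop:Sansuc-splitting} from Corollaries \ref{cor:WA-pi1} and \ref{cor:HP-pi1} by locating the splitting field of $\pi_1(G)$ inside $K$.

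First I would show that $K$ splits the Galois module $\pi_1(G)$. Since $\pi_1(G)$ depends only on the reductive quotient $G\red$, and for a connected reductive $k$-group the algebraic fundamental group is a canonical $\Gal(\kbar/k)$-equivariant quotient of the cocharacter lattice of a maximal torus by the coroot sublattice (see \cite[\S1]{Bor98} and \cite[\S6]{CT}), it is enough to observe that if $G$ splits over $K$, then $G\red$ admits a $K$-split maximal $K$-torus; hence $\Gal(\kbar/K)$ acts trivially on its cocharacter lattice, and a fortiori on the quotient $\pi_1(G)$.

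Next, let $K'\subset \kbar$ denote the smallest Galois extension of $k$ splitting $\pi_1(G)$. By the previous step $K'\subseteq K$, so $\Gal(K'/k)$ is a quotient of $\Gal(K/k)$, and the decomposition group of any place $v$ of $k$ in $K'/k$ is the image of its decomposition group in $K/k$. Under hypothesis (i) these images are cyclic for every $v\in S$. Under hypothesis (ii) the group $\Gal(K'/k)$ is metacyclic, since quotients of metacyclic groups are metacyclic: a Sylow $p$-subgroup of a quotient is the image of a Sylow $p$-subgroup, hence a quotient of a cyclic group, hence cyclic. Part (i) now follows by applying Corollary \ref{cor:WA-pi1} with $K$ replaced by $K'$, and part (ii) follows by applying Corollary \ref{cor:HP-pi1} analogously.

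Since the argument merely combines an elementary group-theoretic inheritance of the hypotheses from $K$ to $K'$ with one structural observation about $\pi_1(G)$, no step is genuinely hard. The only point requiring some care is the first one, namely matching the definition of $\pi_1(G)$ for a general connected linear $k$-group against what ``$G$ splits over $K$'' means (that the reductive quotient $G\red$ contains a $K$-split maximal $K$-torus, which is the precise input needed to trivialize the Galois action on $\pi_1(G)$ over $K$).
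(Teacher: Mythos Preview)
Your argument is correct and follows the same route as the paper, which simply remarks that Proposition~\ref{prop:Sansuc-splitting} follows from Theorem~\ref{thm:WA-S-pi1} because a finite Galois extension splitting $G$ splits $\pi_1(G)$. You invoke Corollaries~\ref{cor:WA-pi1} and~\ref{cor:HP-pi1} rather than Theorem~\ref{thm:WA-S-pi1} itself, and you spell out the passage from $K$ to the smallest splitting field $K'$ of $\pi_1(G)$ (checking that cyclic decomposition groups and the metacyclic property pass to the quotient $\Gal(K'/k)$); the paper leaves this descent implicit, but the substance is identical.
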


Proposition \ref{prop:Sansuc-splitting} follows from our Theorem \ref{thm:WA-S-pi1}:
if a finite Galois extension $K/k$ splits $G$, then it splits $\pi_1(G)$.
The following example shows that  Theorem \ref{thm:WA-S-pi1}
is indeed stronger than Proposition \ref{prop:Sansuc-splitting}.

\begin{example}
Let $k$ be a number field, and let $K_1$ and $K_2$ be two different quadratic extensions of $k$ in $\kbar$.
Let $K$ be the composite of $K_1$ and $K_2$, then $K/k$ is a Galois extension with non-metacyclic Galois group
$\Gal(K/k)\cong\ZZ/2\ZZ\times\ZZ/2\ZZ$.
Set $G_1=\text{SU}_{2,K_1}$, it is a $k$-group.
Set
$$
G_2=R^1_{K_2/k}\Gm:=\ker[N_{K_2/k}\colon R_{K_2/k}\G_{m,K_2}\to\G_{m,k}],
$$
where $N_{K_2/k}$ is the norm map.
Set $\mu=\{(-1,-1),(1,1)\}\subset G_1\times G_2$, and set $G=(G_1\times G_2)/\mu$.

Clearly $G$ does not admit a special covering.
Let $L/k$ be any finite Galois extension in $\kbar$ splitting $G$.
Then $L$ splits both $G_1$ and $G_2$, hence $L\supset K$, and therefore $L/k$ is not metacyclic.
We see that we cannot prove the Hasse principle and weak approximation for
a principal homogeneous space $X$ of $G$
using Proposition \ref{prop:Sansuc-HP} or Proposition \ref{prop:Sansuc-splitting}.

On the other hand, $\pi_1(G)\cong\ZZ$ as an abelian group, and $\Gal(\kbar/k)$ acts on $\pi_1(G)$ via $\Gal(K_2/k)$.
We see that the cyclic Galois extension $K_2/k$ splits $\pi_1(G)$.
By Theorem \ref{thm:WA-S-pi1}(ii) $\Be_\omega(X)=0$, and by Corollary \ref{cor:HP-pi1}
$X$ satisfies the Hasse principle and weak approximation.
\end{example}

The plan of the rest of this paper is as follows.
In Sections \ref{sec:finitely-generated}  we give preliminaries
on Galois cohomology of finitely generated Galois modules.
In Section \ref{sec:quasi-trivial} we give preliminaries
on quasi-trivial groups and on the algebraic fundamental group of a connected linear algebraic group.
In Sections  \ref{sec:HS} and  \ref{sec:PHS} we prove Theorems
\ref{thm:WA-S-H} and \ref{thm:WA-S-pi1}, respectively.
In Section  \ref{sec:extension} we prove Proposition \ref{thm:extension}, and in Section \ref{sec:alternative}
we use this proposition in order to give an alternative proof of Theorem \ref{thm:WA-S-pi1}.
Our proofs are based on the results of Section  \ref{sec:finitely-generated}
and of our papers \cite{BvH} and \cite{BvH2}.


\section{Preliminaries on  Galois cohomology of finitely generated Galois modules}\label{sec:finitely-generated}

\begin{subsec}\label{subsec:Sh}
In this section $k$ denotes a number field, and $B$ is a discrete $\Gal(\kbar/k)$-module
which is finitely generated as an abelian group (we say just ``a finitely generated Galois module'').
By $S$ we always denote a finite subset of $\sV$.
We write
$$
\Sh^i_S(k,B)=\ker\left[ H^i(k, B)\to \prod_{v\notin S} H^i(k_v,B)\right].
$$
We have $\Sh^i_\emptyset(k,B)=\Sh^i(k,B)$.
We set $\Sh^i_{S,\emptyset}(k,B):=\Sh^i_S(k,B)/\Sh^i_\emptyset(k,B)$ and
 $\Sh^i_\omega(k,B)=\bigcup_S\Sh^i_S(k,B)$.
\end{subsec}

Lemmas \ref{lem:S-1.1} and \ref{lem:S-1.3} below
are immediate generalizations of Sansuc's lemmas from \cite[\S 1a]{Sansuc}.
The only difference is that
Sansuc assumes that $B$ is a \emph{finite} Galois module, while we assume that $B$ is a finitely generated
Galois module.
Since Sansuc's proofs are concise, we give detailed proofs here.

\begin{lemma}\label{lem:S-1.1}
Let $K/k$ be a finite Galois extension with Galois group $\gg$ and $S$ be a finite set of places of $k$.

\par (i) If $B$ is a constant $\Gal(\kbar/k)$-module (i.e. $\Gal(\kbar/k)$ acts trivially),
then $\Sh^1_S(K/k,B)=0$ and $\Sh^1_S(k,B)=0$.
\par (ii) If the extension $K/k$ trivializes $B$, there is a reduction
$$
\Sh^1_S(k,B)=\Sh^1_S(K/k,B).
$$
\par (iii) If $S'$ is a finite subset of $\sV$ formed of places with cyclic decomposition groups in $K/k$,
then
$$
\Sh^1_{S\cup S'}(K/k,B)=\Sh^1_{S}(K/k,B).
$$
\end{lemma}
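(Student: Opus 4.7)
For (i), since $\gg := \Gal(K/k)$ acts trivially on $B$, we have $H^1(K/k, B) = \Hom(\gg, B)$. A class in $\Sh^1_S(K/k, B)$ is then a homomorphism $\phi\colon \gg \to B$ that vanishes on every decomposition subgroup $\gg_v$ for $v \notin S$. Because $B$ is abelian and $\phi$ is a homomorphism, $\phi$ is constant on $\gg$-conjugacy classes. The plan is to invoke the Chebotarev density theorem: every element of $\gg$ is conjugate to an unramified Frobenius $\mathrm{Frob}_{v'}$ for some $v' \notin S$. Hence $\phi \equiv 0$.

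For (ii), the strategy is the inflation-restriction sequence
$$
0 \to H^1(K/k, B) \labelto{\mathrm{inf}} H^1(k, B) \labelto{\mathrm{res}} H^1(K, B),
$$
valid since $K$ trivializes $B$, so $B^{\Gal(\kbar/K)} = B$. Compatibility of inflation with restriction to decomposition subgroups gives the easy inclusion $\Sh^1_S(K/k, B) \subseteq \Sh^1_S(k, B)$. For the reverse inclusion I will take $\alpha \in \Sh^1_S(k, B)$ and show that its image $\beta \in H^1(K, B)$ vanishes. The key new observation, relative to Sansuc's finite-$B$ setting, is that $H^1(K, B) = \Hom^{\mathrm{cont}}(\Gal(\kbar/K), B)$ with trivial $\Gal(\kbar/K)$-action, and any continuous homomorphism from a profinite group to the discrete finitely generated abelian group $B$ has finite image (its image is compact, hence finite in $B$, hence lands in $B_{\mathrm{tors}}$). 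Therefore $\beta$ factors through some finite Galois extension $L/K$. Since every local restriction of $\alpha$ vanishes at each $v \notin S$, the same holds for $\beta$ at each place $w$ of $K$ above such a $v$. Applying part (i) to $L/K$ with the finite set of places of $K$ above $S$ then yields $\beta = 0$.

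For (iii), fix $\alpha \in \Sh^1_{S\cup S'}(K/k, B)$ and $v \in S'$ with cyclic decomposition group $\gg_v = \langle g \rangle$; the goal is $\alpha|_{\gg_v} = 0$. The plan is to produce, via Chebotarev density, a place $v' \notin S \cup S'$, unramified in $K/k$, whose Frobenius $\mathrm{Frob}_{v'}$ is $\gg$-conjugate to $g$. For such $v'$, the decomposition group $\gg_{v'} = \langle \mathrm{Frob}_{v'}\rangle$ is a $\gg$-conjugate of $\gg_v$. Since restriction in $H^1$ to conjugate subgroups agrees under the canonical conjugation isomorphism, the vanishing $\alpha|_{\gg_{v'}} = 0$, which holds because $v' \notin S \cup S'$, forces $\alpha|_{\gg_v} = 0$. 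The main obstacle, and the only real deviation from Sansuc's finite-module argument, is the finite-image step in part (ii); once that is granted, parts (ii) and (iii) reduce to essentially the same Chebotarev-style reasoning as part (i).
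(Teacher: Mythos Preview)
Your argument is correct. The paper does not actually supply a proof: the lemma is stated with a \qed\ symbol and described just before as a ``straightforward generalization'' of \cite[Lemma~1.1]{Sansuc}, where $B$ is assumed finite. Your write-up is precisely the expected unpacking of that remark---Chebotarev for (i) and (iii), inflation--restriction for (ii)---and you correctly isolate the one extra step needed beyond Sansuc's finite-module case: in (ii), the restriction $\beta\in H^1(K,B)=\Hom^{\mathrm{cont}}(\Gal(\kbar/K),B)$ has finite image (compact image in a discrete group), hence factors through a finite quotient $\Gal(L/K)$, after which part (i) applies. One small point you leave implicit: once $\beta=0$ shows that $\alpha$ lies in the image of inflation, you still need that its (unique) preimage in $H^1(K/k,B)$ lies in $\Sh^1_S(K/k,B)$; this follows because the local inflation maps $H^1(\gg_v,B)\to H^1(k_v,B)$ are injective for the same reason as the global one.
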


\newcommand{\Inf}{{\rm Inf\,}}

\begin{proof}
We follow closely \cite[Proof of Lemma 1.1]{Sansuc}.

We prove (i). We write $\gg=\Gal(K/k)$.
Since by assumption the finite group $\gg$ acts on $B$ trivially,
the group $\Sh^1_S(K/k,B)$ is the kernel of the homomorphism
\begin{equation}\label{e:Hom-gr}
\Hom_{\rm gr}(\gg,B)\,\longrightarrow \, \prod_{v\notin S}\,\Hom_{\rm gr}(\gg^v,B).
\end{equation}
where $\Hom_{\rm gr}$ denotes the group of group homomorphisms, and
$\gg^v$ denotes the decomposition group of a place $w$  of $K$ prolonging $v$.
By Chebotarev's density theorem, any element $\gamma$ of $\gg$ is conjugate to an element of some $\gg^v$
with $v\notin S$ (because the set $S$ is finite).
It follows that the homomorphism \eqref{e:Hom-gr} is injective, and hence, $\Sh^1_S(K/k,B)=0$.

Now let $c\in H^1(k,B)=\Hom_{\rm c.gr}(\Gal(\kbar/k), B)$,
where $\Hom_{\rm c.gr}$ denotes the group of continuous homomorphisms of groups.
Since $B$ is discrete and $\Gal(\kbar/k)$ is profinite,
we see that $\ker\, c$ is an open normal subgroup of $\Gal(\kbar/k)$,
and we may write $\ker\, c=\Gal(\kbar/K)$,
where $K$ is a finite Galois extension of $k$ contained in $\kbar$.
Let $c_K$ denote the natural homomorphism $\Gal(K/k)\to B$;
then $c$ is the inflation $\Inf c_K$.
Let $v\in \sV\smallsetminus S$ and let $w$ be a prolongation of $v$ to $K$.
Let $c_{K,v}$ denote the image of $c_K$ in $H^1(K_w/k_v, B)=\Hom(\Gal(K_w/k_v), B)$
and $c_v$ denote the image of $c$ in $H^1(k_v, B)=\Hom(\Gal(\kbar_v/k_v), B)$.
By assumption $c_v=0$.
Since $c_v=\Inf c_{K,v}$ and $\Gal(\kbar_v/k_v)$ acts on $B$ trivially,  we see that $c_{K,v}=0$.
Thus $c_K\in \Sh^1_S(K/k,B)$.
Since $\Sh^1_S(K/k,B)=0$, see above, we conclude that $c_K=0$, and hence, $c=0$.
Thus $\Sh^1_S(k,B)=0$, as required.

We prove (ii).
We assume that the extension $K/k$ trivializes $B$.
Let $S'$ denote the set of the prolongations to $K$ of the places in $S$.
Then by (i) we have  $\Sh^1_{S'}(K,B)=0$.
We consider the following commutative diagram with exact rows:
\[
\xymatrix{
0\ar[r] & H^1(K/k,B)\ar[r]\ar[d]_\alpha                       & H^1(k,B)\ar[r]\ar[d]_\beta               & H^1(K,B)\ar[d]^{\varphi} \\
0\ar[r] & \prod_{v\notin S} H^1(K_{w}/k_v,B)\ar[r]\  & \prod_{v\notin S} H^1(k_v,B)\ar[r] & \prod_{w\notin S'} H^1(K_w,B)
}
\]
obtained from inflation-restriction exact sequences,
after choosing, for each place $v$ of $k$, a prolongation $w$ of $v$ from $k$ to $K$.
Since $\Sh^1_{S'}(K,B)=0$, the homomorphism $\varphi$ in the diagram is injective.
It follows that $\Sh^1_S(K/k,B)=\ker\,\alpha\cong\ker\,\beta=\Sh^1_S(k,B)$, as required.

Chebotarev's density theorem implies that for each place $v$
with cyclic decompositions group $\gg^v$ (defined up to conjugacy in $\gg$),
there exists infinitely many other places $v'$ with the same decomposition group (again, up to conjugacy),
which proves (iii).
\end{proof}

Consider the homomorphism $\Gal(\kbar/k)\to\Aut(B)$.
The image $\gg$ of this homomorphism is finite.
Let $K$ denote the subfield in
$\kbar$ corresponding to the kernel of this homomorphism, then $K/k$
is a finite Galois extension with Galois group $\gg$.
We say that $K/k$ be the smallest Galois extension in $\kbar$ splitting $B$.

\begin{corollary}\label{cor:4cases}
Let $K/k$ be the smallest Galois extension in $\kbar$ splitting $B$,
and let $S\subset \sV$ be a finite set of places of $k$.
If  all places $v\in S$ have cyclic decomposition groups in $\Gal(K/k)$,
then  $\Sh^1_{S,\emptyset}(k,B)=0$.
\qed
\end{corollary}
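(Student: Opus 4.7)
The plan is to reduce immediately to the Galois extension $K/k$ that splits $B$ using Lemma~\ref{lem:S-1.1}(ii), and then apply Lemma~\ref{lem:S-1.1}(iii) with $S' = S$ to collapse $S$ to the empty set on the level of $\Sh^1(K/k,B)$.

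More precisely, first I would note that since by hypothesis $K/k$ is the smallest Galois extension splitting $B$, in particular it trivializes $B$. Therefore Lemma~\ref{lem:S-1.1}(ii) applies and gives the two reductions
\[
\Sh^1_S(k,B) = \Sh^1_S(K/k,B), \qquad \Sh^1_\emptyset(k,B) = \Sh^1_\emptyset(K/k,B) = \Sh^1(K/k,B).
\]
Next, since every place $v\in S$ has a cyclic decomposition group in $\Gal(K/k)$, the set $S$ itself satisfies the hypothesis on $S'$ in Lemma~\ref{lem:S-1.1}(iii). Applying (iii) with the ``base'' set taken to be $\emptyset$ and with $S' = S$ yields
\[
\Sh^1_S(K/k,B) = \Sh^1_{\emptyset \cup S}(K/k,B) = \Sh^1_\emptyset(K/k,B) = \Sh^1(K/k,B).
\]
Combining the two displays gives $\Sh^1_S(k,B) = \Sh^1_\emptyset(k,B)$, so the quotient $\Sh^1_{S,\emptyset}(k,B)$ vanishes, as required.

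There is essentially no obstacle: the corollary is a purely formal consequence of parts (ii) and (iii) of Lemma~\ref{lem:S-1.1}, which are assumed as given. The only thing to be careful about is to apply (iii) in the right direction, taking the union of the empty set with $S$ (not adjoining some $S'$ disjoint from $S$), so that $\Sh^1_S(K/k,B)$ actually gets identified with the full Shafarevich–Tate group $\Sh^1(K/k,B)$.
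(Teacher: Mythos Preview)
Your proof is correct and follows exactly the paper's approach: the paper's ``Idea of proof'' is the single chain $\Sh^1_S(k,B)=\Sh^1_S(K/k,B)=\Sh^1_\emptyset(K/k,B)=\Sh^1_\emptyset(k,B)$, which is precisely your two applications of Lemma~\ref{lem:S-1.1}(ii) bracketing one application of Lemma~\ref{lem:S-1.1}(iii) with base set $\emptyset$ and $S'=S$.
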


\begin{proof}
By Lemma \ref{lem:S-1.1}(ii), $\Sh^1_S(k,B)=\Sh^1_S(K/k,B)$
and $\Sh^1_\emptyset(k,B)=\Sh^1_\emptyset(K/k,B)$.
By Lemma \ref{lem:S-1.1}(iii) $\Sh^1_S(K/k,B)=\Sh^1_\emptyset(K/k,B)$.
Thus $\Sh^1_S(k,B)=\Sh^1_\emptyset(k,B)$ and $\Sh^1_{S,\emptyset}(k,B)=0$.
\end{proof}

Recall that the exponent of a finite group is the least common multiple of the orders of its elements.
A finite group is metacyclic if and only if its exponent is equal to its order.

\begin{lemma}\label{lem:S-1.3}
Let $K/k$ be the smallest Galois extension in $\kbar$ splitting $B$, and
let $n$ and $e$ be the order and the exponent of  $\gg=\Gal(K/k)$, respectively.
Then multiplication by $n/e$ equals $0$ in $\Sh^1_\omega(k,B)$.
In particular, if $K/k$ is a metacyclic extension, then
$$
\Sh^1_\omega(k,B)=0,\text{ and hence, }\Sh^1(k,B)=0.
$$
The same holds if $B$ is finite and its order is relatively prime to $n/e$.
\qed
\end{lemma}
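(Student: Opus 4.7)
The plan is to follow Sansuc's argument for finite coefficients, inserting a preliminary reduction to handle the fact that $B$ may now have a non-trivial free part, and relying throughout on the corestriction-restriction identity together with the Chebotarev density theorem. Fix $\beta \in \Sh^1_\omega(k,B)$ and a finite $S \subset \sV$ with $\beta \in \Sh^1_S(k,B)$.

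First I would show that $\beta$ is inflated from $H^1(\gg, B)$. By the inflation-restriction sequence, this amounts to showing $\mathrm{Res}_K\beta = 0$ in $H^1(K,B)$. Since $K$ splits $B$, we have $H^1(K,B) = \Hom_{\mathrm{cont}}(\Gal(\kbar/K), B)$; as $B$ is discrete and finitely generated while $\Gal(\kbar/K)$ is profinite, the image of $\mathrm{Res}_K\beta$ is a compact discrete subgroup of $B$, hence finite, so $\mathrm{Res}_K\beta$ factors through $\Gal(L/K)^{\mathrm{ab}}$ for some finite Galois extension $L/K$. By Chebotarev the Frobenii at places of $K$ above places $v \notin S$ and unramified in $L$ cover $\Gal(L/K)^{\mathrm{ab}}$; moreover $\mathrm{Res}_K\beta$ vanishes on each such Frobenius by the hypothesis $\beta \in \Sh^1_S$. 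Hence $\mathrm{Res}_K\beta = 0$ and $\beta$ comes from $H^1(\gg, B)$.

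Next, for every cyclic subgroup $C \subset \gg$, Chebotarev provides a place $v \notin S$ with decomposition group $C$ in $K/k$. Then $\beta_v = 0$, and the inflation $H^1(C,B) = H^1(K_w/k_v, B) \hookrightarrow H^1(k_v, B)$ is injective because $K_w$ splits $B$; so $\mathrm{Res}_C\beta = 0$. To conclude, decompose $\beta = \sum_p \beta_p$ into $p$-primary parts, which is possible since $H^1(\gg, B)$ is killed by $n$. Restriction to a Sylow $p$-subgroup $\gg_p$ is injective on $p$-primary components (its composition with corestriction is the $p$-coprime integer $[\gg : \gg_p]$). Inside the $p$-group $\gg_p$, of order $n_p$ and exponent $e_p$, pick a cyclic subgroup $C_p$ of order $e_p$; by the previous observation $\mathrm{Res}_{C_p}(\beta_p|_{\gg_p}) = 0$, so the identity $\mathrm{Cor}^{\gg_p}_{C_p} \circ \mathrm{Res}^{\gg_p}_{C_p} = [\gg_p : C_p] = n_p/e_p$ yields $(n_p/e_p)(\beta_p|_{\gg_p}) = 0$, whence $(n_p/e_p)\beta_p = 0$. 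Since $n/e$ and $n_p/e_p$ have the same $p$-adic valuation and $\beta_p$ is $p$-primary, $(n/e)\beta_p = 0$ for every $p$, and summing gives $(n/e)\beta = 0$. The metacyclic case is the special case $n = e$.

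The main new ingredient beyond Sansuc's finite-coefficient argument is the very first reduction to $H^1(\gg, B)$; it hinges on the observation that any continuous homomorphism from a profinite group to a finitely generated discrete abelian group has finite image, which is precisely what lets Sansuc's original argument be imported into the finitely generated setting.
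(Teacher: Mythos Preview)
Your argument is correct and is precisely the ``straightforward generalization'' of Sansuc's Lemma~1.3 that the paper alludes to; the paper itself gives no proof beyond that citation and the \qed\ box. Note also that your first reduction (showing $\beta$ is inflated from $H^1(\gg,B)$) is exactly the content of Lemma~\ref{lem:S-1.1}(ii), likewise stated in the paper without proof.
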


\begin{proof}
We follow closely \cite[Proofs of Lemmas 1.2 and  1.3]{Sansuc}.

First, we show that the group $\Sh^1_\omega(k,B)$ is a finite group, namely,
\begin{equation}\label{e:Sansuc-1.2.1}
\Sh^1_\omega(k,B)=\ker\left[H^1(\gg,B)\to\prod_{g\in\gg} H^1(\langle g \rangle,B)\right],
\end{equation}
where $\gg$ denotes the Galois group of the smallest Galois extension $K/k$ in $\kbar$ splitting $B$,
and $\langle g\rangle$ denotes the subgroup of $\gg$ generated by $g$.
We can  write
\begin{equation}\label{e:Sansuc-1.2.2}
\Sh^1_\omega(k,B)=\Sh^1_{S_0}(K/k,B),
\end{equation}
where $S_0$ denotes the finite subset of $\sV$ consisting of the places (ramified in $K$)
whose decomposition groups in $K/k$ are noncyclic.

Indeed, it follows from the definition of $K$ and  Lemma \ref{lem:S-1.1}(ii), that $\Sh^1_S(k,B)=\Sh^1_S(K/k,B)$.
Further, by Lemma \ref{lem:S-1.1}(iii), $\Sh^1_S(K/k,B)=\Sh^1_{S\cap S_0}(K/k,B)$.
This proves that the group $\Sh^1_\omega(k,B)$ is finite, and gives \eqref{e:Sansuc-1.2.2}.
The formula \eqref{e:Sansuc-1.2.1} follows from \eqref{e:Sansuc-1.2.2}, because by the construction of $S_0$,
for any $v\notin S_0$ we have $\gg^v=\langle g\rangle$ for some $g\in \gg$, and by Chebotarev's density theorem
any cyclic subgroup $\langle g\rangle\subseteq \gg$ is conjugate to a decomposition group $\gg^v$.

Now let $g$ be an element of $\gg$, and let $n_g$ denote its order.
The map
\[{\rm Cor}\circ{\rm Res}\colon H^1(\gg,B)\to H^1(\langle g\rangle, B)\to  H^1(\gg,B)\]
is multiplication by $n/n_g$; see \cite[Section 6, Proposition 8]{AW}.
By  \eqref{e:Sansuc-1.2.1}, the restriction of this map to  $\Sh^1_\omega(k,B)$ is 0,
which proves the first assertion of the lemma, from which the other assertions follow immediately.
 \end{proof}


\section{Preliminaries on quasi-trivial groups}\label{sec:quasi-trivial}

\begin{subsec}\label{subsec:connected-groups}
Let $k$ be a field of characteristic 0, $\kbar$ a fixed algebraic closure of $k$.
Let $G$ be a connected linear $k$-group.
We set $\Gbar=G\times_k \kbar$.
We use the following notation:
\pn $G\uu$ is the unipotent radical of $G$;
\pn $G\red=G/G\uu$ (it is reductive);
\pn $G\sss$ is the derived group of $G\red$ (it is semisimple);
\pn $G\sc$ is the universal cover of $G\sss$ (it is simply connected);
\pn $G\tor=G\red/G\sss$ (it is a torus);
\end{subsec}

\begin{definition}[ \cite{CT}, Prop. 2.2]\label{def:qt}
A connected linear $k$-group $G$ over a field $k$ of characteristic 0
is called \emph{quasi-trivial},
if $G\tor$ is a quasi-trivial torus and $G\sss$ is simply connected.
\end{definition}

\begin{lemma}[well known]
\label{lem:repr-quasi-trivial}
Let $k$ be a field of characteristic 0 and let $X$ be
a left homogeneous space of a connected linear $k$-group $G$.
Let $\overline{H}\subset \overline{G}$ be the stabilizer of a point $\xbar\in X(\kbar)$;
we assume that $\overline{H}$ is connected.
Then the variety $X$ is a homogeneous space of some quasi-trivial $k$-group $G'$
such that the stabilizer $\overline{H'}\subset\overline{G'}$ of $\xbar$ in $\overline{G'}$ is connected.
\end{lemma}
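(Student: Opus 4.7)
The plan is to construct $G'$ as a fiber product built from $G$ and a $z$-extension of the reductive quotient $G\red = G/G\uu$. First I would invoke the standard $z$-resolution of a reductive $k$-group (see \cite{CT}): there exists a short exact sequence
$$
1 \to Z \to \widetilde G \to G\red \to 1
$$
of $k$-groups in which $Z$ is a central quasi-trivial $k$-torus and $\widetilde G$ is a reductive quasi-trivial $k$-group. The construction embeds the fundamental group of $(G\red)\sc$ into a quasi-trivial $k$-torus, which is always possible since any finite $k$-group of multiplicative type embeds into such a torus.

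Next I would form the fiber product
$$
G' \;:=\; G \times_{G\red} \widetilde G
$$
with respect to the quotient map $G \twoheadrightarrow G\red$ and the $z$-extension $\widetilde G \twoheadrightarrow G\red$. The two projections from $G'$ yield short exact sequences
$$
1 \to Z \to G' \to G \to 1 \quad \text{and} \quad 1 \to G\uu \to G' \to \widetilde G \to 1,
$$
in which $Z$ is central in $G'$ (it is central in $\widetilde G$ and, having trivial image in $G\red$, it centralizes $G\uu$ as well). The second sequence identifies $G\uu$ with the unipotent radical of $G'$ and $\widetilde G$ with $(G')\red$; in particular $(G')\sss = \widetilde G\sss$ is simply connected and $(G')\tor = \widetilde G\tor$ is quasi-trivial, so $G'$ is quasi-trivial in the sense of Definition \ref{def:qt}.

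Finally I would make $G'$ act on $X$ through the surjection $G' \twoheadrightarrow G$. This action is transitive on $\kbar$-points since $G$ acts transitively on $X(\kbar)$, and the stabilizer $\overline{H'} \subset \overline{G'}$ of $\xbar$ is the preimage of $\overline H$ under this surjection, so it fits in a short exact sequence
$$
1 \to Z_{\kbar} \to \overline{H'} \to \overline H \to 1.
$$
Since both $\overline H$ and the torus $Z$ are connected, so is $\overline{H'}$, which gives the conclusion.

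The only external input is the existence of a $z$-resolution of $G\red$ whose central kernel is a \emph{quasi-trivial} $k$-torus (and not just a $k$-torus); this is the one non-formal step, as it relies on the fact that any finite $k$-group of multiplicative type embeds into a quasi-trivial torus. Everything else is a formal check on the fiber product.
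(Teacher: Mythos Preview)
Your construction is essentially a spelling-out of the paper's one-line proof, which simply cites \cite[Prop.-Def.~3.1]{CT} (and \cite[Proof of Lemma~5.2]{Bor96}): Colliot-Th\'el\`ene's flasque resolution furnishes exactly an extension $1\to S\to G'\to G\to 1$ with $G'$ quasi-trivial and $S$ a central $k$-torus, and the passage from $\Hbar$ to its preimage $\overline{H'}$ is the same as yours. Your fiber-product reduction to the reductive case is also how the proof of \cite[Prop.-Def.~3.1]{CT} handles the unipotent radical.

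Two inaccuracies worth fixing. First, in the resolution of \cite{CT} the kernel $S$ is a \emph{flasque} torus, not in general a quasi-trivial one; you cannot always arrange both $\widetilde G$ and $Z$ to be quasi-trivial (already for a $k$-torus $T$ this would require a two-term permutation resolution of $\That$, i.e.\ that $T$ be invertible, which fails in general). Your argument only uses that $Z$ is connected, so the proof survives verbatim once ``quasi-trivial $k$-torus'' is weakened to ``$k$-torus'' (or ``flasque $k$-torus''). Second, the aside about ``the fundamental group of $(G\red)\sc$'' is garbled, since $(G\red)\sc$ is simply connected; you presumably meant the kernel of $(G\red)\sc\to (G\red)\sss$, but in any case this is not how the resolution in \cite{CT} is built, and the sentence can simply be dropped.
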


\begin{proof}
The lemma follows e.g. from \cite[Prop.-Def. 3.1]{CT}.
\end{proof}

\section{Homogeneous spaces of quasi-trivial groups}\label{sec:HS}

In this section we prove Theorem \ref{thm:WA-S-H}.

\begin{subsec}\label{subsec:complexes}
Let $k$ be a field of characteristic 0,
and let $A\to B$ be a morphism of $\Gal(\kbar/k)$-modules.
We write $\H^i(k,A\to B)$ for the Galois hypercohomology of the complex $A\to B$,
where $A$ is in degree 0 and $B$ is in degree 1.

When $k$ is a number field, we define  $\Sh^i_S(k,A\to B)$,  $\Sh^i_{S,\emptyset}(k,A\to B)$,
and $\Sh^i_\omega(k,A\to B)$  as in \S \ref{sec:finitely-generated}.
\end{subsec}

The following lemma must be well known (see \cite[Proof of Lemma 4.4]{Bor96} and  \cite[Proof of Cor.~2.15]{BvH} for similar results)
but we do not know a reference where it was stated, so we state and prove it here.

\begin{lemma}\label{lem:Sha}
 Let $k$ be a number field and $P\to L$ a complex of $\Gal(\kbar/k)$-modules in degrees 0 and 1,
where $P$ is a permutation $\Gal(\kbar/k)$-module.
Then for any finite set $S$ of places of $k$ we have a canonical isomorphism $\Sh_S^1(k,L)\isoto\Sh_S^2(k,P\to L)$.
\end{lemma}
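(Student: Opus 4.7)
The plan is to deduce the isomorphism from the distinguished triangle associated to the two-term complex $P\to L$, combined with vanishing theorems for the permutation module $P$. Concretely, I would first write down the short exact sequence of complexes
$$
0 \to L[-1] \to (P\to L) \to P \to 0
$$
and take its long exact sequence in Galois hypercohomology, both globally over $k$ and locally over each $k_v$. This produces the four-term exact sequence
$$
H^1(k,P) \to H^1(k,L) \to \H^2(k,P\to L) \to H^2(k,P) \to H^2(k,L),
$$
and an analogous sequence over each $k_v$, compatible with the localization maps.

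The key inputs are two vanishing results about permutation modules. First, since $P\iso \bigoplus_j \mathrm{Ind}^k_{k_j}\ZZ$ for finite subextensions $k_j\subset\kbar$, Shapiro's lemma gives $H^1(k,P)=\bigoplus_j \Hom_{\mathrm{cts}}(\Gal(\kbar/k_j),\ZZ)=0$, and similarly $H^1(k_v,P)=0$. Second, I claim $\Sh^2_S(k,P)=0$ for any finite $S\subset\sV$. Indeed, combining Shapiro with the short exact sequence $0\to\ZZ\to\QQ\to\QQ/\ZZ\to 0$ and the vanishing of $H^i(k_j,\QQ)$ in positive degree (Galois cohomology of a uniquely divisible module is simultaneously torsion and a $\QQ$-vector space) reduces the claim to $\Sh^1_{S_j}(k_j,\QQ/\ZZ)=0$, where $S_j$ is the set of places of $k_j$ above $S$. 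This last vanishing follows from the Chebotarev density theorem: any continuous character $\chi\colon\Gal(\kbar/k_j)\to\QQ/\ZZ$ whose restriction to $D_w$ is trivial for almost all $w$ would cut out a cyclic extension of $k_j$ in which almost every prime splits completely, forcing the extension, and thus $\chi$, to be trivial.

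With these vanishing statements in hand, the isomorphism follows from a direct diagram chase. Injectivity of $\Sh^1_S(k,L)\to \Sh^2_S(k,P\to L)$ is immediate from $H^1(k,P)=0$, which makes the global map $H^1(k,L)\to\H^2(k,P\to L)$ injective. For surjectivity, take $x\in\Sh^2_S(k,P\to L)$; by functoriality of the exact sequence, its image in $H^2(k,P)$ localizes to zero outside $S$, so it lies in $\Sh^2_S(k,P)=0$. Hence $x$ lifts to some $y\in H^1(k,L)$, and for each $v\notin S$ the vanishing $H^1(k_v,P)=0$ makes the local map $H^1(k_v,L)\to \H^2(k_v,P\to L)$ injective, forcing the image of $y$ in $H^1(k_v,L)$ to be zero; thus $y\in\Sh^1_S(k,L)$. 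The only nontrivial step in this argument is the Chebotarev-based vanishing $\Sh^2_S(k,P)=0$; once that is secured, the rest is bookkeeping from the long exact sequence.
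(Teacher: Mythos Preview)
Your proof is correct and follows essentially the same approach as the paper: both use the long exact sequence in hypercohomology associated to the complex $P\to L$, the vanishing $H^1(k,P)=0$, the key input $\Sh^2_S(k,P)=0$, and the same diagram chase. The only difference is that the paper cites Sansuc \cite[(1.9.1)]{Sansuc} for $\Sh^2_S(k,P)=0$, whereas you supply a self-contained proof via Shapiro's lemma and Chebotarev.
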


\begin{proof}
We have an exact sequence
$$
0=H^1(k,P)\to H^1(k,L)\to \H^2(k, P \to L)\to H^2(k,P),
$$
and similar exact sequences for Galois cohomology over $k_v$ for $v\notin S$.
We obtain a commutative diagram with exact rows
$$
\xymatrix{
0\ar[r] &H^1(k,L)\ar[r]\ar[d]        &\H^2(k,P\to L)                   \ar[r]\ar[d] &H^2(k,P)\ar[d]  \\
0\ar[r] &\prod_{v\notin S}H^1(k_v,L)\ar[r] &\prod_{v\notin S}\H^2(k_v,P\to L)\ar[r] &\prod_{v\notin S}H^2(k_v,P).
}
$$
Since $P$ is a permutation module, we have $\Sh^2_S(k,P)=0$, cf. \cite[(1.9.1)]{Sansuc}.
An easy diagram chase shows that the homomorphism $\Sh^1_S(k,L)\to\Sh^2_S(k,P\to L)$
induced by this diagram is an isomorphism.
\end{proof}

\begin{proposition}\label{prop:Sh1}
Let $X$ be a homogeneous space of a  quasi-trivial $k$-group $G$ over a number field $k$.
Let $\Hbar\subset\Gbar$ be the stabilizer of a $\kbar$-point $\xbar\in X(\kbar)$.
Let $S\subset \sV$ be a finite set of places of $k$.
Then there is a canonical isomorphism $\Be_S(X)\simeq\Sh^1_S(k,\Hbarhat)$.
\end{proposition}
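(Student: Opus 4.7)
The plan is to identify $\Bra(X)$ with the second Galois hypercohomology of the two-term complex of character modules $[\Ghat\to\Hbarhat]$ (in degrees $0$ and $1$), and then to reduce $\Sh^2_S$ of this complex to $\Sh^1_S$ of the second term via Lemma \ref{lem:Sha}.

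First I would invoke the description of the algebraic Brauer group of a homogeneous space proved in \cite{BvH} and \cite{BvH2}: when $X$ is a homogeneous space of a quasi-trivial $k$-group $G$ with geometric stabilizer $\Hbar$, there is a canonical isomorphism
\[
\Bra(X)\;\simeq\;\H^2(k,\Ghat\to\Hbarhat),
\]
where the map $\Ghat\to\Hbarhat$ is induced by restriction of characters. The identification is natural with respect to the extension $k\subset k_v$, so it yields a compatible isomorphism $\Bra(X_{k_v})\simeq\H^2(k_v,\Ghat\to\Hbarhat)$ over each completion. Taking kernels over the places $v\notin S$ therefore gives a canonical isomorphism
\[
\Be_S(X)\;\simeq\;\Sh^2_S(k,\Ghat\to\Hbarhat).
\]

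Next I would observe that the hypothesis of quasi-triviality on $G$ gives exactly the permutation condition needed to apply Lemma \ref{lem:Sha}. Indeed, every character of $G$ is trivial on the unipotent radical $G\uu$ and on the simply connected derived subgroup $G\sss$, so $\Ghat=\widehat{G\tor}$; and since $G\tor$ is quasi-trivial, this is a permutation $\Gal(\kbar/k)$-module. Thus Lemma \ref{lem:Sha}, applied with $P=\Ghat$ and $L=\Hbarhat$, yields a canonical isomorphism
\[
\Sh^1_S(k,\Hbarhat)\;\isoto\;\Sh^2_S(k,\Ghat\to\Hbarhat),
\]
and composing with the identification of the previous paragraph produces the desired canonical isomorphism $\Be_S(X)\simeq\Sh^1_S(k,\Hbarhat)$.

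The main obstacle is the first step: locating in \cite{BvH2} the precise statement that $\Bra(X)\simeq\H^2(k,\Ghat\to\Hbarhat)$ for a homogeneous space (possibly without $k$-points) of a quasi-trivial group, and checking that the isomorphism is natural with respect to passage to the completions $k_v$ (so that it intertwines the local restriction maps used in the definitions of $\Be_S(X)$ and $\Sh^2_S$). Once that input is in place, everything else is formal: the reduction to $\Sh^1_S(k,\Hbarhat)$ is the content of Lemma \ref{lem:Sha}, and the required permutation hypothesis is exactly the definition of quasi-triviality.
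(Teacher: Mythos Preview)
Your proposal is correct and follows essentially the same route as the paper's proof: invoke \cite[Thm.~7.2]{BvH2} for the canonical (and functorial in $k$) isomorphism $\Bra(X)\simeq\H^2(k,\Ghat\to\Hbarhat)$, pass to $\Sh^2_S$, and then apply Lemma~\ref{lem:Sha} using that $\Ghat=\widehat{G\tor}$ is a permutation module. Your added explanation of why $\Ghat$ is permutation is a nice bit of detail the paper leaves implicit.
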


\begin{proof}
By \cite[Thm.~7.2]{BvH2} we have a canonical, functorial in $k$ isomorphism
$$
\Bra(X)\isoto \H^2(k, \Ghat\to\Hbarhat),
$$
whence we obtain a canonical isomorphism
$$
\Be_S(X)\simeq\Sh^2_S(k,\Ghat\to \Hbarhat).
$$
Since $\Ghat$ is a permutation module, by Lemma \ref{lem:Sha} we have a canonical isomorphism
$$
\Sh^1_S(k,\Hbarhat)\isoto\Sh^2_S(k,\Ghat\to \Hbarhat).
$$
Thus we obtain a canonical isomorphism $\Be_S(X)\cong\Sh^1_S(k,\Hbarhat)$.
\end{proof}

\begin{subsec}{\em Proof of Theorem \ref{thm:WA-S-H}.}
By Proposition \ref{prop:Sh1}
we have a canonical isomorphism $\Be_S(X)\simeq\Sh^1_S(k,\Hbarhat)$,
hence we obtain a canonical isomorphism $\Be_{S,\emptyset}(X)\simeq\Sh^1_{S,\emptyset}(k,\Hbarhat)$
and a canonical isomorphism $\Be_\omega(X)\simeq\Sh^1_\omega(k,\Hbarhat)$.
In  case (i) by Corollary \ref{cor:4cases} we have $\Sh^1_{S,\emptyset}(k,\Hbarhat)=0$,
hence $\Be_{S,\emptyset}(X)=0$.
In  case (ii) by  Lemma \ref{lem:S-1.3} we have $\Sh^1_\omega(k,\Hbarhat)=0$,
hence $\Be_\omega(X)=0$.
\qed
\end{subsec}


\section{Principal homogeneous spaces of connected groups}\label{sec:PHS}

In this section we prove Theorem \ref{thm:WA-S-pi1}.

\begin{subsec}\label{subsec:M}
Let $M$ be a $\Gal(\kbar/k)$-module, finitely generated over $\ZZ$.
Choose a $\ZZ$-free resolution
\begin{equation}\label{eq:resolution}
0\to L\to P\to M\to 0,
\end{equation}
where $L$ and $P$ are finitely generated $\ZZ$-free Galois modules.
We write
$$
\H^i(k,M^D):=\H^i(k,P^\vee\to L^\vee),
$$
where $P^\vee:=\Hom_\ZZ(P,\ZZ)$  is in degree 0 and $L^\vee:=\Hom_\ZZ(L,\ZZ)$ is in degree 1.
We regard $M^D:=(P^\vee\to L^\vee)$ as a dual complex to $M$.
Since the isomorphism class of $M^D$ in the derived category
does not depend on the choice of the resolution \eqref{eq:resolution},
the hypercohomology  $\H^i(k,M^D)$ also does not depend on the resolution.
\end{subsec}

\begin{lemma}\label{lem:4cases}
Let $M$ be as in \ref{subsec:M}.
Let $K/k$ be the smallest Galois extension in $\kbar$ splitting $M$.
Let $S\subset \sV$ be finite set of places of $k$.
   \par(i) If   any place $v\in S$ has a cyclic decomposition group in $\Gal(K/k)$,
then $\Sh^2_{S,\emptyset}(k,M^D)=0$.
   \par(ii) If  $K/k$ is a metacyclic extension,
then $\Sh^2_\omega(k,M^D)=0$.
\end{lemma}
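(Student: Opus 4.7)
The plan is to reduce both statements to the corresponding $\Sh^1$-facts of Section~\ref{sec:finitely-generated} by choosing the resolution \eqref{eq:resolution} carefully and applying Lemma~\ref{lem:Sha}. Since $\H^\bullet(k,M^D)$ does not depend on the chosen resolution, I am free to pick a convenient one.

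Write $\gg=\Gal(K/k)$. I would choose a finite family of $\gg$-orbits in $M$ that generate $M$ as an abelian group and take $P$ to be the direct sum of the corresponding permutation lattices $\ZZ[\gg/H_i]$. The tautological surjection $P\onto M$ is $\Gal(\kbar/k)$-equivariant, its kernel $L$ is a $\ZZ$-free finitely generated Galois module whose action factors through $\gg$, and the sequence $0\to L\to P\to M\to 0$ is a resolution of the required form. With this choice, both $P^\vee$ and $L^\vee$ are split by $K$, and $P^\vee$ remains a permutation module.

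With this resolution $M^D$ is the complex $P^\vee\to L^\vee$ in degrees $0,1$ and $P^\vee$ is a permutation module, so Lemma~\ref{lem:Sha} yields a canonical isomorphism $\Sh^1_S(k,L^\vee)\isoto\Sh^2_S(k,M^D)$ for every finite $S\subset\sV$. This isomorphism is natural in $S$ with respect to the inclusions $\Sh^*_{S}\subseteq\Sh^*_{S'}$, so passing to quotients by $\Sh^*_\emptyset$ and to the direct limit over $S$ yields $\Sh^2_{S,\emptyset}(k,M^D)\simeq\Sh^1_{S,\emptyset}(k,L^\vee)$ and $\Sh^2_\omega(k,M^D)\simeq\Sh^1_\omega(k,L^\vee)$.

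Finally, let $K'\subseteq K$ be the smallest Galois extension of $k$ splitting $L^\vee$; then $\Gal(K'/k)$ is a quotient of $\Gal(K/k)$. Under hypothesis (i), each decomposition group in $\Gal(K'/k)$ is the image of a cyclic decomposition group in $\Gal(K/k)$, hence cyclic, and Corollary~\ref{cor:4cases} applied to $L^\vee$ gives $\Sh^1_{S,\emptyset}(k,L^\vee)=0$. Under hypothesis (ii), $\Gal(K'/k)$ is a quotient of a metacyclic group, hence metacyclic (quotients of cyclic Sylow subgroups are cyclic), so Lemma~\ref{lem:S-1.3} gives $\Sh^1_\omega(k,L^\vee)=0$. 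The main obstacle is the preparatory step of simultaneously forcing $P$ to be a permutation module (so that Lemma~\ref{lem:Sha} applies to $M^D$) \emph{and} making its Galois action factor through $\gg$ (so that $L^\vee$ inherits the splitting hypothesis on $K$); once this resolution is in place, the rest of the argument is a purely formal combination of Lemma~\ref{lem:Sha} with the $\Sh^1$-theory of Section~\ref{sec:finitely-generated}.
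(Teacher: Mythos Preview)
Your proof is correct and follows essentially the same approach as the paper's: choose the resolution so that $P$ is a permutation $\gg$-module (hence $P^\vee$ is one too and $K$ splits $L^\vee$), apply Lemma~\ref{lem:Sha} to reduce $\Sh^2_S(k,M^D)$ to $\Sh^1_S(k,L^\vee)$, and then invoke Corollary~\ref{cor:4cases} and Lemma~\ref{lem:S-1.3}. You are simply more explicit than the paper about two points---the actual construction of the permutation surjection $P\onto M$ via $\gg$-orbits, and the passage from $K$ to the smallest splitting field $K'$ of $L^\vee$ before applying the $\Sh^1$-results---where the paper just writes ``Since $K$ splits $L^\vee$'' and appeals to those results directly.
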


\begin{proof}
Set $\gg=\Gal(K/k)$, then $M$ is a $\gg$-module.
We can choose  a resolution \eqref{eq:resolution}
such that $P$ is a permutation $\gg$-module and $L$ is a $\ZZ$-free $\gg$-module.
Then $P^\vee$ is a permutation module as well, and
by Lemma \ref{lem:Sha}  we have a canonical isomorphism
$$
\Sh^1_S(k,L^\vee)\isoto\Sh^2_S(k,P^\vee \to L^\vee)=\Sh^2_S(k,M^D),
$$
whence we obtain  canonical isomorphisms
\begin{gather*}
\Sh^1_{S,\emptyset}(k,L^\vee)\isoto\Sh^2_{S,\emptyset}(k,M^D),\\
\Sh^1_\omega(k,L^\vee)\isoto\Sh^2_\omega(k,M^D).
\end{gather*}
Since $K$ splits  $L^\vee$, in case (i) by Corollary \ref{cor:4cases} we have
$\Sh^1_{S,\emptyset}(k,L^\vee)=0$, hence
 $\Sh^2_{S,\emptyset}(k,M^D)=0$.
In  case (ii) by Lemma \ref{lem:S-1.3} we have $\Sh^1_\omega(k,L^\vee)=0$,
hence   $\Sh^2_\omega(k,M^D)=0$.
\end{proof}

\begin{subsec}{\em Proof of Theorem \ref{thm:WA-S-pi1}.}
By \cite[Lemma 6.8]{Sansuc} there is a canonical isomorphism $\Bra(X)\isoto\Bra(G)$.
By \cite[Cor.~7]{BvH}
there is a canonical isomorphism $\Bra(G)\isoto \H^2(k,\pi_1(G)^D)$.
Hence $\Be_S(X)\simeq \Sh^2_S(k,\pi(G)^D)$, whence
$\Be_{S,\emptyset}(X)\simeq \Sh^2_{S,\emptyset}(k,\pi_1(G)^D)$ and
$\Be_\omega(X)\simeq\Sh^2_\omega(k,\pi_1(G)^D)$.
In case (i) by Lemma \ref{lem:4cases}(i) we have $\Sh^2_{S,\emptyset}(k,\pi_1(G)^D)=0$,
hence $\Be_{S,\emptyset}(X)=0$.
In  case (ii) by Lemma \ref{lem:4cases}(ii) we have $\Sh^2_\omega(k,\pi_1(G)^D)=0$,
hence $\Be_\omega(X)=0$.
\qed
\end{subsec}

\section{Connected groups as homogeneous spaces of quasi-trivial groups}\label{sec:extension}

In this section we prove Proposition \ref{thm:extension}.

\begin{subsec}\label{subsec:ext}
{\em Proof of Proposition \ref{thm:extension}.}
We may and shall assume that $G$ is reductive, cf. proof of \cite[Prop.-Def. 3.1]{CT}.
Consider the biggest quotient torus $G\tor$ of $G$.
Set $M=\pi_1(G)$, then $\xx(G\tor)=M/M\tors$,
where $M\tors$ denotes the torsion subgroup of $M$.
Since $K$ splits $M$, we see that $K$ splits $\xx(G\tor)$.

We follow the construction in the proof of \cite[Prop.-Def. 3.1]{CT}.
Let $Z^0$ denote the radical (the identity component of the center) of our reductive group $G$.
Since $Z^0$ is isogeneous to $G\tor$, we see that $K$ splits $\xx(Z^0)$.
Set $\gg=\Gal(K/k)$, then $\xx(Z^0)$ is a $\gg$-module.
Choose a surjective homomorphism of $\gg$-modules $P\onto \xx(Z^0)$, where $P$ is a finitely generated permutation $\gg$-module.
We regard $P$ as a $\Gal(\kbar/k)$-module, then $K$ splits $P$.
Let $Q$ be the quasi-trivial $k$-torus with $\xx(Q)=P$.
We have a surjective homomorphism $\theta\colon Q\onto Z^0$.

Consider the canonical homomorphism
$$
\rho\colon G\sc\onto G\sss\into G.
$$
Set $G'=G\sc\times_k Q$, then  $G'$ is a quasi-trivial group and $K$ splits $\widehat{G'}=P^\vee$.
We define a surjective homomorphism
$$
\alpha\colon G'\to G,\quad \alpha(g,q)=\rho(g)\theta(q), \text{ where }g\in G\sc,\ q\in Q.
$$
Set $H=\ker\,\alpha$.
Note that 
and that $H$ is a central subgroup of multiplicative type of $G'$.
We have an exact sequence
\begin{equation}\label{eq:exact-H-G'-G}
1\to H\to G'\to G\to 1.
\end{equation}
Set $M':=\pi_1(G')=P$, then $K$ splits $M$ and $M'$.
By Lemma \ref{lem:Cyril} below
we have a canonical isomorphism of Galois modules
$\Hhat\cong \Ext^0_\ZZ(M'\to M,\ \ZZ)$ (where $M'$ is in degree 0).
It follows that $K$ splits $\Hhat$, which proves the proposition.
\qed
\end{subsec}

\begin{lemma}\label{lem:Cyril}
Assume we have a short exact sequence of Galois modules
$$
1\to H\to G'\labelto{\varphi} G\to 1,
$$
where $G$ and $G'$ are connected reductive $k$-groups over a field $k$ of characteristic 0,
and $H\subset G'$ is a central $k$-subgroup of multiplicative type.
Set $M:=\pi_1(G),\ M':=\pi_1(G')$.
Then there is a canonical isomorphism
$$
\XX(H)\cong\Ext^0_\ZZ(M'\to M,\ \ZZ),
$$
where in the complex $M'\to M$, the Galois module $M'$ is in degree 0 and $M$ is in degree 1,
and we write $\XX(H):=\Hhat$.
\end{lemma}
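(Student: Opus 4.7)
The plan is to compute $\Ext^0_\ZZ(M'\to M,\,\ZZ)$ directly in the derived category as $H^0$ of a mapping cone, and then to identify this cone with $\Hhat$ via Cartier duality applied to $1\to H\to T'\to T\to 1$ for suitable maximal tori.

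To begin, I would choose a maximal torus $T'\subset G'$. Since $H\subset Z(G')$ and the centre of a connected reductive group is contained in every maximal torus, we have $H\subset T'$, so $T:=T'/H$ is a maximal torus of $G$. The central isogeny $G'\sss\to G\sss$ induced by $\varphi$ gives a canonical identification $G\sc\iso G'\sc$; let $T\sc\subset G\sc$ be the maximal torus mapping onto $T\sss:=T\cap G\sss$. By the definition of the algebraic fundamental group \cite[\S1]{Bor98},
$$
M'=\xx(T')/\xx(T\sc),\qquad M=\xx(T)/\xx(T\sc),
$$
where $\xx(T\sc)$ maps into $\xx(T')$ and $\xx(T)$ via the canonical maps from $G\sc$. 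The homomorphism $M'\to M$ induced by $\varphi$ is then induced by $\pi_*\colon\xx(T')\to\xx(T)$, where $\pi\colon T'\to T$ is the quotient.

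Next, I would pass to the derived category. Since $T\sc\to T'$ and $T\sc\to T$ are isogenies onto their images, the maps $\xx(T\sc)\to\xx(T')$ and $\xx(T\sc)\to\xx(T)$ are injective, and the two-term complexes $[\xx(T\sc)\to\xx(T')]$ and $[\xx(T\sc)\to\xx(T)]$ (placed in degrees $-1,0$) are quasi-isomorphic to $M'$ and $M$, respectively. Dualizing yields
$$
R\Hom_\ZZ(M',\ZZ)\iso[\XX(T')\to\XX(T\sc)],\qquad R\Hom_\ZZ(M,\ZZ)\iso[\XX(T)\to\XX(T\sc)]
$$
in degrees $0,1$. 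Since $[M'\to M]$ in degrees $0,1$ is $\mathrm{Cone}(M'\to M)[-1]$, the derived dual $R\Hom_\ZZ([M'\to M],\ZZ)$ equals the mapping cone of $R\Hom_\ZZ(M,\ZZ)\to R\Hom_\ZZ(M',\ZZ)$, whose constituents are the inclusion $\XX(T)\hookrightarrow\XX(T')$ (Cartier dual of $\pi$) and the identity on $\XX(T\sc)$. A short direct calculation of the degree-$0$ cohomology of this cone yields $H^0=\XX(T')/\XX(T)$. Combining with the Cartier dual of $1\to H\to T'\to T\to 1$, namely $0\to\XX(T)\to\XX(T')\to\Hhat\to 0$, gives the desired canonical isomorphism $\Ext^0_\ZZ(M'\to M,\,\ZZ)\iso\Hhat$.

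The main obstacle is the bookkeeping in the derived category — keeping the degree shifts and the signs in the mapping-cone differential straight — together with the verification that the isomorphism does not depend on the choice of $T'$. For the latter, any two maximal tori of $G'$ are conjugate and inner automorphisms act trivially on $\pi_1(G')$, on $\pi_1(G)$, and on $\Hhat$ (because $H$ is central), so the construction is canonical; functoriality in the Galois action then yields Galois-equivariance.
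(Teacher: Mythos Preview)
Your argument is correct and follows essentially the same strategy as the paper's proof: both hinge on the observation that the central isogeny $G'^{\mathrm{ss}}\to G^{\mathrm{ss}}$ forces $G'^{\mathrm{sc}}\isoto G^{\mathrm{sc}}$, so that a single cocharacter lattice $\xx(T\sc)$ resolves both $M'$ and $M$, and both conclude by identifying the relevant $\Ext^0$ with $\coker\bigl(\XX(T)\hookrightarrow\XX(T')\bigr)=\Hhat$ via Cartier duality of $1\to H\to T'\to T\to 1$.

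The only difference is packaging. The paper observes directly that the two-term complex $\bigl[\xx(T_{G'})\to\xx(T_G)\bigr]$ (in degrees $0,1$) is quasi-isomorphic to $[M'\to M]$, because the kernel complex $\bigl[\xx(T_{G'^{\mathrm{sc}}})\to\xx(T_{G^{\mathrm{sc}}})\bigr]$ is acyclic; it then reads off $\Ext^0$ from the long exact sequence attached to the short exact sequence of two-term complexes $0\to(0\to\xx(T_G))\to(\xx(T_{G'})\to\xx(T_G))\to(\xx(T_{G'})\to 0)\to 0$. You instead resolve $M'$ and $M$ separately by $[\xx(T\sc)\to\xx(T')]$ and $[\xx(T\sc)\to\xx(T)]$, dualize, and compute $H^0$ of the mapping cone, which lands you in a three-term complex. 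The paper's route is marginally shorter since it never leaves two-term complexes, but the underlying computation is identical and your derived-category bookkeeping is sound.
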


\begin{proof}[Proof\quad  {\rm (C.~Demarche).}]
Note that the homomorphism $G'\to G$ induces an isomorphism $G\psc\to G\sc$.
Choose compatible maximal tori $T_G\subset G$, $T_{G'}\subset G'$, $T_{G\sc}\subset G\sc$ and $T_{G\psc}\subset G\psc$.
It follows from the definition of $M$ and $M'$ that we have a commutative diagram with exact rows
\begin{equation}\label{eq:Cyril}
\xymatrix{
0\ar[r] &\xx(T_{G\psc})\ar[r]\ar[d]^\cong  &\xx(T_{G'})\ar[r]\ar[d] &M'\ar[r]\ar[d] &0  \\
0\ar[r] &\xx(T_{G\sc})\ar[r]               &\xx(T_{G})\ar[r]        &M \ar[r]       &0.
}
\end{equation}
Since the homomorphism $\varphi_*\colon G\psc\to G\sc$ is an isomorphism,
the left-hand vertical arrow $\xx(T_{G\psc})\to \xx(T_{G\sc})$ in diagram \eqref{eq:Cyril} is an isomorphism,
and the five lemma shows that the morphism of complexes of Galois modules
$$
(\xx(T_{G'})\to \xx(T_{G}))\longrightarrow (M'\to M)
$$
given by this diagram is a quasi-isomorphism.

The short exact sequence of complexes
$$
0\to (0\to \xx(T_{G}))\to (\xx(T_{G'})\to \xx(T_{G}))\to (\xx(T_{G'})\to 0)\to 0
$$
induces an exact sequence of Ext-groups
$$
\Hom_\ZZ(\xx(T_{G}),\ZZ)\to \Hom_\ZZ(\xx(T_{G'}),\ZZ)\to \Ext^0_\ZZ(\xx(T_{G'})\to \xx(T_{G}),\ \ZZ)\to \Ext^1_\ZZ(\xx(T_{G}),\ZZ)=0.
$$
Since the complex $\xx(T_{G'})\to \xx(T_{G})$ is quasi-isomorphic to $M'\to M$, we obtain an exact sequence
$$
\Hom_\ZZ(\xx(T_{G}),\ZZ)\to \Hom_\ZZ(\xx(T_{G'}),\ZZ)\to \Ext^0_\ZZ(M'\to M,\ \ZZ)\to 0,
$$
which we can write as
\begin{equation}\label{eq:exact-M}
\XX(T_{G})\to \XX(T_{G'})\to \Ext^0_\ZZ(M'\to M,\ \ZZ)\to 0,
\end{equation}
where  $\XX(T_{G})=\widehat{T_G}$ and $\XX(T_{G'})=\widehat{T_{G'}}$.
On the other hand, the exact sequence of $k$-groups of multiplicative type
$$
1\to H\to T_{G'}\to T_G\to 1
$$
gives an exact sequence
\begin{equation}\label{eq:exact-H}
0\to \XX(T_{G})\to \XX(T_{G'})\to \XX(H)\to 0.
\end{equation}
Comparing exact sequences \eqref{eq:exact-M} and \eqref{eq:exact-H},
we obtain a canonical isomorphism of Galois modules
\[
\XX(H)\cong \Ext^0_\ZZ(M'\to M,\ \ZZ). \qedhere
\]
\end{proof}

\begin{remark}
Constructing and arguing as in the proof of  \cite[Prop.-Def. 3.1]{CT},
we can construct an exact sequence \eqref{eq:exact-H-G'-G} with $G'$  a quasi-trivial $k$-group and
$H$ a flasque $k$-torus (and not just some $k$-group of multiplicative type)
such that the smallest Galois extension  $K/k$ in $\kbar$ splitting $\pi_1(G)$ splits both tori $G^{\prime\text{\,tor}}$ and $H$.
This strengthens Remark 3.1.1 of \cite{CT}.
\end{remark}

\section{Principal homogeneous spaces of connected groups again}\label{sec:alternative}

In this section we give an alternative proof of Theorem  \ref{thm:WA-S-pi1}
based on Proposition  \ref{thm:extension} and Theorem  \ref{thm:WA-S-H}.

\begin{subsec}\label{subsec:Galois-action}
Let $X$ be a homogeneous space of a  quasi-trivial $k$-group $G$ over a number field $k$.
Let $\Hbar\subset\Gbar$ be the stabilizer of a $\kbar$-point $\xbar\in X(\kbar)$.
We describe the action of $\Gal(\kbar/k)$ on $\Hbarhat$ defined by the homogeneous space $X$.

Let $h \in \Hbar(\kbar)$, then
$\xbar.h=\xbar$\,.
Let $\sigma\in \Gal(\kbar/k)$, then
${}^\sigma x. {}^\sigma h={}^\sigma x$\,.
For any $\sigma\in \Gal(\kbar/k)$ we
choose $g_\sigma\in G(\kbar)$ such that ${}^\sigma x =x.g_\sigma$
and the function $\sigma\mapsto g_\sigma$ is locally constant,
then
$$
g_\sigma\cdot {}^\sigma h\cdot g_\sigma^{-1}\in H(\kbar).
$$
The map $h\mapsto g_\sigma\cdot {}^\sigma h\cdot g_\sigma^{-1}$
comes from some $\sigma$-semialgebraic automorphism  (see \cite[\S 1.1]{Borovoi-Duke} for a definition) $\nu_\sigma$ of $\Hbar$,
which induces an automorphism $\widehat{\nu_\sigma}$ of $\Hbarhat$
(namely, $\widehat{\nu_\sigma}(\chi)(h)=\chi(\nu_\sigma^{-1}(h))$ for $\chi\in\Hbarhat$ and $h\in\Hbar(\kbar)$).
If we choose another element $g'_\sigma\in G(\kbar)$
such that ${}^\sigma x =x.g'_\sigma$, then
$g'_\sigma=h'g_\sigma$ for some $h'\in\Hbar(\kbar)$.
Then we obtain $\nu'_\sigma=\Inn(h')\circ\nu_\sigma$,
where $\Inn(h')$ is the inner automorphism of $\Hbar$ defined by $h'$.
We have $\widehat{\nu'_\sigma}=\widehat{\nu_\sigma}$,
because $\Inn(h')$  acts trivially on $\Hbarhat$.
The well-defined map $\sigma\mapsto \widehat{\nu_\sigma}$ is a homomorphism
defining an action of $\Gal(\kbar/k)$ on $\Hbarhat$.
\end{subsec}

\begin{subsec}\label{subsec:alternative}
{\em Alternative proof of Theorem \ref{thm:WA-S-pi1}.}
We deduce Theorem \ref{thm:WA-S-pi1} from Proposition \ref{thm:extension} and Theorem \ref{thm:WA-S-H}.
Since $K$ splits $\pi_1(G)$, by Proposition \ref{thm:extension}
we can write $G=G'/H$, where $G'$ is a quasi-trivial $k$-group
and $H$ is a central $k$-subgroup of multiplicative type in $G'$
such that $K$ splits $\Hhat$.

The group $G'$ acts on $X$ via $G$.
Let $\xbar\in X(\kbar)$, then the stabilizer of $\xbar$  in $\overline{G'}$ is $\Hbar:=H_\kbar$.
Consider the action $\sigma\mapsto \widehat{\nu_\sigma}$
of $\Gal(\kbar/k)$ on $\widehat{\Hbar}$ defined in \ref{subsec:Galois-action}.
Write ${}^\sigma x = x.g'_\sigma$, where $g'_\sigma\in G'(\kbar)$,
then
$$
\nu_\sigma(h)= g'_\sigma\cdot {}^\sigma h\cdot (g'_\sigma)^{-1}={}^\sigma h
$$
for $h\in H(\kbar)$,
because $H$ is central in $G'$.
It follows that the action of $\Gal(\kbar/k)$ on $\Hbarhat$ defined by the homogeneous space $X$ coincides
with the action on $\Hhat$ defined by the $k$-structure of $H$.

Now, since $K$ splits $\Hhat$, we see that $K$ splits $\Hbarhat$,
and Theorem \ref{thm:WA-S-pi1} follows from Theorem  \ref{thm:WA-S-H}.
\qed
\end{subsec}

\bigskip

\noindent\emph{Acknowledgements.}
We thank J.-L. Colliot-Th\'el\`ene for helpful e-mail correspondence,
 C. Demarche for proving Lemma \ref{lem:Cyril},
and the anonymous referee for a quick and thorough review.
The author finished this paper while visiting the Max-Planck-Institut f\"ur Mathematik, Bonn;
he thanks the Institute for hospitality, support, and excellent working conditions.


\end{document}